\numberwithin{equation}{section}
\theoremstyle{plain}
\newtheorem{definition}{Definition}[section]
\newtheorem{theorem}[definition]{Theorem}
\newtheorem{proposition}[definition]{Proposition}
\newtheorem{lemma}[definition]{Lemma}
\newtheorem{corollary}[definition]{Corollary}
\theoremstyle{definition}
\newtheorem{remark}[definition]{Remark}
\newtheorem*{ack}{Acknowledgements}
\let\save@mathaccent\mathaccent
\newcommand*\if@single[3]{%
  \setbox0\hbox{${\mathaccent"0362{#1}}^H$}%
  \setbox2\hbox{${\mathaccent"0362{\kern0pt#1}}^H$}%
  \ifdim\ht0=\ht2 #3\else #2\fi
  }
\newcommand*\rel@kern[1]{\kern#1\dimexpr\macc@kerna}
\newcommand*\widebar[1]{\@ifnextchar^{{\wide@bar{#1}{0}}}{\wide@bar{#1}{1}}}
\newcommand*\wide@bar[2]{\if@single{#1}{\wide@bar@{#1}{#2}{1}}{\wide@bar@{#1}{#2}{2}}}
\newcommand*\wide@bar@[3]{%
  \begingroup
  \def\mathaccent##1##2{%
    \let\mathaccent\save@mathaccent
    \if#32 \let\macc@nucleus\first@char \fi
    \setbox\z@\hbox{$\macc@style{\macc@nucleus}_{}$}%
    \setbox\tw@\hbox{$\macc@style{\macc@nucleus}{}_{}$}%
    \dimen@\wd\tw@
    \advance\dimen@-\wd\z@
    \divide\dimen@ 3
    \@tempdima\wd\tw@
    \advance\@tempdima-\scriptspace
    \divide\@tempdima 10
    \advance\dimen@-\@tempdima
    \ifdim\dimen@>\z@ \dimen@0pt\fi
    \rel@kern{0.6}\kern-\dimen@
    \if#31
      \overline{\rel@kern{-0.6}\kern\dimen@\macc@nucleus\rel@kern{0.4}\kern\dimen@}%
      \advance\dimen@0.4\dimexpr\macc@kerna
      \let\final@kern#2%
      \ifdim\dimen@<\z@ \let\final@kern1\fi
      \if\final@kern1 \kern-\dimen@\fi
    \else
      \overline{\rel@kern{-0.6}\kern\dimen@#1}%
    \fi
  }%
  \macc@depth\@ne
  \let\math@bgroup\@empty \let\math@egroup\macc@set@skewchar
  \mathsurround\z@ \frozen@everymath{\mathgroup\macc@group\relax}%
  \macc@set@skewchar\relax
  \let\mathaccentV\macc@nested@a
  \if#31
    \macc@nested@a\relax111{#1}%
  \else
    \def\gobble@till@marker##1\endmarker{}%
    \futurelet\first@char\gobble@till@marker#1\endmarker
    \ifcat\noexpand\first@char A\else
      \def\first@char{}%
    \fi
    \macc@nested@a\relax111{\first@char}%
  \fi
  \endgroup
}
\renewcommand{\d}{\,\mathrm{d}}
\newcommand{\Int}{\mathrm{Int}}
\DeclareMathOperator{\Cone}{Cone}
\newcommand{\enumlabelformat}{\roman}
\newlength{\thelabelsep}
\newcounter{inlineenum}
\renewcommand{\theinlineenum}{\enumlabelformat{inlineenum}}
\let\epsilon\varepsilon
\let\phi\varphi
\DeclareMathOperator{\arcosh}{arcosh}
\newcommand{\nchi}{{\raise.3ex\hbox{$\chi$}}}
\newcommand{\lm}[1]{\mathbb{L}^2(#1)}
\newcommand{\LpLS}{Lo\-rentz\-ian pre-length space }
\newcommand{\LpLSn}{Lo\-rentz\-ian pre-length space}
\newcommand{\LpLSsn}{Lo\-rentz\-ian pre-length spaces}
\newcommand*{\bx}{\bar{x}}
\newcommand*{\by}{\bar{y}}
\newcommand*{\bz}{\bar{z}}
\newcommand*{\bw}{\bar{w}}
\newcommand*{\bp}{\bar{p}}
\newcommand*{\bq}{\bar{q}}
\newcommand{\hx}{\hat{x}}
\newcommand{\hy}{\hat{y}}
\newcommand{\hz}{\hat{z}}
\newcommand{\hp}{\hat{p}}
\newcommand{\hhm}{\hat{m}}
\newcommand{\hw}{\hat{w}}
\newcommand{\ma}{\ensuremath{\measuredangle}}
\DeclareMathOperator{\TSec}{TSec}
\renewcommand{\d}{\,\mathrm{d}}
\let\@fnsymbol\@arabic
\title{Lorentz meets Ptolemy}
\author{Felix Rott\footnotemark[1], Zhe-Feng Xu\footnotemark[1],\textsuperscript{,}\footnotemark[2] \,\,Matteo Zanardini\footnotemark[1]}
\date{\today}
\begin{document}

\maketitle
\footnotetext[1]{SISSA, 34136, Trieste, Italy, frott@sissa.it, zxu@sissa.it, mzanardi@sissa.it} \footnotetext[2]{School of Mathematical Sciences, University of Science and Technology of China,  230026, Hefei, China, xzf1998@mail.ustc.edu.cn}
\begin{abstract}
We consider a Lorentzian analogue of the Ptolemy inequality and we prove that in the setting of globally hyperbolic spacetimes it is equivalent to a global timelike sectional curvature bound from above by zero. We investigate the link between the Ptolemy inequality and the hyperbolic inversion and establish some applications and rigidity properties. 
\bigskip

\noindent
\emph{Keywords:} Lorentzian geometry, Lorentzian length spaces, Ptolemy inequality, curvature bounds
\medskip

\noindent
\emph{MSC2020:}
53C23, 
53C50, 
53B30 
\end{abstract}
\tableofcontents

\section{Introduction} 
Famously, the Ptolemy inequality says that any quadruple of points in the Euclidean plane satisfies the following inequality: 
\begin{equation}
\label{eq: OG Ptolemy}
\widebar{AB} \cdot \widebar{CD} + \widebar{BC} \cdot \widebar{AD} \geq \widebar{AC} \cdot \widebar{BD} \, .
\end{equation}
Moreover, equality holds if and only if the four points lie on a circle or on a line. 
It is named after the famous Greek astronomer and  mathematician Claudius Ptolemy, who mentioned it in his work \emph{Almagest} roughly two thousand years ago. 
The inequality can be interpreted as a relation between the product of the diagonals of a quadrilateral and the sum of the products of opposite sides. 
The inequality is also valid in higher dimensions. 

In a more modern context and outside of Euclidean space, the Ptolemy inequality at least goes back to being mentioned in \cite{Sch40}. 
The inequality is more prominently featured as a characterisation of inner product spaces among merely normed ones, much in the spirit of the parallelogram identity \cite{Sch52}. 

Even more recently, there are a number of results about Ptolemaic metric spaces, meaning that \eqref{eq: OG Ptolemy} is required to hold for any quadruple of points in a metric space. 
The Ptolemy inequality has a strong connection to (sectional) curvature bounds from above. 
Among other things, on Riemannian manifolds, the Ptolemy inequality is equivalent to the space being CAT(0), and Finsler manifolds enjoying this property are necessarily Riemannian \cite{BuFaWra}. 
It is further known that on Ptolemaic metric spaces, the notions of being CAT(0) and Busemann convex are equivalent, where in general CAT(0) is strictly stronger than Busemann convexity \cite{FLS07}. 

To the best of our investigative abilities, we have found no mention of the Ptolemy inequality in a Lorentzian context. 
We propose a definition for (non-smooth) Ptolemaic Lorentzian spaces and collect various results. 
In particular, we show that Minkowski space is Ptolemaic. 
To this end, much like in positive signature, our approach makes use of inversion techniques. 
In Lorentzian signature, this idea was already mentioned in \cite{Chr} in the particular case of Minkowski space. More related to physics, the class of symmetries generated by inversions plays an important role in conformal field theories; see e.g.\ \cite{CFTbook}. 

We extend this concept of inversion to more general Lorentzian spaces and moreover, in the spirit of \cite{BHX08}, give a definition of an inversion time separation function. 
Our main theorem concerns the interplay between the Ptolemy inequality and a bound from above on timelike sectional curvature. 
\begin{theorem}
A globally hyperbolic spacetime is Ptolemaic if and only if it is globally non-positively curved.
\end{theorem}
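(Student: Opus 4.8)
The plan is to prove the two implications separately, pivoting in both on the fact that Minkowski space $\R^{\Mink}$ is Ptolemaic. This last fact follows from the hyperbolic inversion, because for a fixed basepoint $f$ the inversion time separation $\hdf$ is realised as the genuine time separation of the inverted points, hence automatically obeys the reverse triangle inequality. It is convenient to record, just as in the metric case, that the Lorentzian Ptolemy inequality for a timelike quadruple with distinguished ``centre'' $f$ is nothing but the reverse triangle inequality for $\hdf$ applied to the remaining three points; this reformulation is what makes comparison arguments applicable to the general statement.

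\textbf{Globally non-positively curved $\Rightarrow$ Ptolemaic.} Given a timelike quadruple $x_{1}\ll x_{2}\ll x_{3}\ll x_{4}$, I would split the timelike ``quadrilateral'' it spans along a diagonal into two timelike triangles, replace each by its comparison triangle in $\R^{\Mink}$ using the global timelike triangle comparison, and glue the two comparison triangles (Alexandrov's lemma, i.e.\ a Lorentzian Reshetnyak-type majorisation of timelike polygons). This produces four points in $\R^{\Mink}$ whose mutual time separations coincide with those of $M$ along the ``sides'' and are controlled from the appropriate side along the ``diagonals'', the direction of every inequality being forced by the reverse triangle inequality for $\tsep$. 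Feeding the Ptolemy inequality already known in $\R^{\Mink}$ into this configuration then gives the Ptolemy inequality in $M$; equivalently, the majorisation shows that $\hdf$ inherits the reverse triangle inequality. Care is needed to ensure that the comparison configuration is itself chronological and that the decomposition is compatible with global hyperbolicity.

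\textbf{Ptolemaic $\Rightarrow$ globally non-positively curved.} I would argue in two stages. First the infinitesimal bound: to show $\TSec(\sigma)\le 0$ at every $p\in M$ and every timelike plane $\sigma=\operatorname{span}(u,v)\subset T_{p}M$, take a one-parameter family of small timelike geodesic quadrilaterals obtained by applying $\exp_{p}$ to suitably scaled and symmetrised combinations of $u$ and $v$, insert them into the Lorentzian Ptolemy inequality, and Taylor-expand $\tsep$ about $p$. With the vertices arranged so that the lower-order terms cancel, the leading non-vanishing term of the Ptolemy defect is a constant of definite sign times $\TSec(\sigma)$ times a power of the scaling parameter, so the inequality forces $\TSec(\sigma)\le 0$; this is the Lorentzian analogue of the computation in \cite{BuFaWra}. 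Second, globalisation: in a globally hyperbolic spacetime a pointwise bound $\TSec\le 0$ upgrades to the global timelike triangle comparison by the Lorentzian comparison theorems of Rauch and Harris type together with a globally hyperbolic Cartan--Hadamard argument excluding timelike conjugate points; where necessary, the Ptolemy inequality can be fed back in to rule out the residual degenerate configurations, much as Ptolemaic Riemannian manifolds are forced to be conjugate-point free.

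\textbf{Main obstacle.} The delicate points are the four-point majorisation in the first implication and the globalisation in the second. In Lorentzian signature triangle comparison constrains only timelike triangles, $\tsep$ is not a metric and degenerates across the light cones, and causal diamonds are not compact; consequently both the existence of the Minkowski comparison quadrilateral and the passage from a pointwise curvature bound to a genuinely global comparison statement must be carried out with explicit control of the causal relations, which is precisely where global hyperbolicity enters.
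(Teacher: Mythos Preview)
Your argument for the direction ``globally non-positively curved $\Rightarrow$ Ptolemaic'' is essentially the paper's: split the quadrilateral along a diagonal into two comparison triangles, glue with Alexandrov's lemma, and invoke the Minkowski Ptolemy inequality. That part is fine.

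For the infinitesimal step in the converse, your Taylor-expansion approach differs from the paper's. The paper does not expand the Ptolemy defect directly; instead it places four points on a rectangular hyperbola in a timelike plane $\Pi\subset T_pM$ (these realise \emph{equality} in the Minkowski Ptolemy inequality by Lemma~\ref{lem: Minkowski is Ptolemy}), pushes them forward by $\exp_p$, and uses a strict hinge comparison to show that $K(\Pi)>0$ produces a strict violation of Ptolemy. Your route would also work and is more computational; the paper's is more geometric and exploits the equality characterisation.

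The genuine gap is your globalisation step. You assert that in a globally hyperbolic spacetime ``a pointwise bound $\TSec\le 0$ upgrades to the global timelike triangle comparison by \dots a Cartan--Hadamard argument excluding timelike conjugate points''. But $\TSec\le 0$ only kills \emph{conjugate} points; it does not rule out cut points coming from distinct maximising geodesics between the same endpoints. The flat Lorentzian cylinder $(\mathbb{R}\times S^1,\, dt^2-d\theta^2)$ is globally hyperbolic with $\TSec\equiv 0$, yet it is not future one-connected and is not globally non-positively curved in the synthetic sense, so the implication as you state it is false. Your hedge (``feed the Ptolemy inequality back in to rule out residual degenerate configurations'') gestures in the right direction but is not a proof. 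The paper's missing ingredient is Lemma~\ref{lem: Ptolemy imply maximising}: if a timelike geodesic $\gamma$ had a first cut point at parameter $a$, applying Ptolemy to the four \emph{collinear} points $\gamma(0)\ll\gamma(a-\varepsilon)\ll\gamma(a)\ll\gamma(a+\varepsilon)$ yields $2a\varepsilon\ge(a-\varepsilon)\varepsilon+\varepsilon\,\ell(\gamma_0,\gamma_{a+\varepsilon})$, i.e.\ $\ell(\gamma_0,\gamma_{a+\varepsilon})\le a+\varepsilon$, contradicting the definition of a cut point. Thus Ptolemy \emph{itself} empties the timelike cut locus, gives geodesic uniqueness and future one-connectedness, and only then does the globalisation theorem of \cite{EG25+} apply. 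This use of Ptolemy on collinear points is the key idea your outline is missing.

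A minor slip: you write that ``causal diamonds are not compact'', but their compactness is precisely the definition of global hyperbolicity you are assuming.
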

Finally, we will use some rigidity properties of the hyperbolic inversion to single out spaces of vanishing timelike sectional curvature and give a characterization of Minkowski space among spacetimes.

\section{Preliminaries}
In this section we recall some basic information regarding smooth and non-smooth Lorentzian geometry. 
Let us start with the smooth case. 
For further details we refer to \cite{BeemBook, Min19}. 

A spacetime is a non-compact smooth manifold $M$ endowed with a Lorentzian metric tensor $g$ with signature $(+, -, \dots -)$ and time-oriented by a timelike vector field $X \in \mathfrak{X}(M)$. 
Given a point $p \in M$, we say that a vector $v \in T_pM$ is timelike provided $g_p(v,v) >0$ and we say that it is null provided $g_p(v,v) =0$. 
Otherwise, we call $v$ spacelike.
We call $v \in T_pM$ causal if it is null or timelike and, moreover, we say that $v$ is future-directed (resp.\ past-directed) provided $g(v,X(p)) \geq 0$ (resp.\ $\leq 0$). 
A curve is called future-directed timelike (resp.\ causal) if its tangent vector at each point is future-directed timelike (resp.\ causal). 
The class of future-directed causal vectors in $T_pM$ is denoted by $F_pM$, and we define the hyperbolic norm out of $g$ on the tangent space $T_pM$ by
\begin{equation*}
\| v\|_{F_g} : = \begin{cases}
\sqrt{g_p(v,v)} \, , & \text{if } v \in F_pM, \\
0 \,, & \text{otherwise} \, .
\end{cases}
\end{equation*} 
We refer to \cite[Section A.3]{Octet} and \cite{HBS} for an introduction to hyperbolic (Banach) norms. 
Notice that, in our framework, we define $\|v\|_{F_g}=0$ (instead of $- \infty$) if $v$ is spacelike to have an immediate compatibility with the current literature on triangle comparison in Lorentzian pre-length spaces.

We define a two-variable function that acts dually to the distance induced by a Riemannian metric: this is the time separation function $\ell: M \times M \rightarrow [0, + \infty]$ and it is given by the formula
\begin{equation}
    \ell(x,y) := \sup \int_0^1 \| \dot \gamma_t \|_{F_g} \d t \, ,
\end{equation}
where the supremum is taken over all piecewise $C^1$ future-directed causal curves $\gamma :[0,1] \rightarrow M$ such that $\gamma_0=x$ and $\gamma_1=y$. 
If there are no such curves, by convention we set $\ell(x,y)= 0$. 
We define the causal (resp.\ timelike) relations $\leq$ (resp.\ $\ll$)  on $M$ by saying that two points are related if they can be connected by a future-directed causal (resp.\ timelike) curve. 
Moreover, we use the notation
\[
y \in J^+(x) \, \Leftrightarrow \, x \leq y \, , \qquad \qquad y \in I^+(x) \, \Leftrightarrow \, x \ll y \, ,
\]
and similarly for the past versions of these sets. 

\begin{definition}[Strong causality, global hyperbolicity and future one-connectedness]
Let $(M,g)$ be a spacetime. 
\begin{itemize}
\item[(i)] We say that $M$ is strongly causal if the timelike diamonds $I(x,y):=I^+(x) \cap I^-(y)$ form a subbase for the topology.  
\item[(ii)] We say that it is globally hyperbolic provided $\leq$ is a partial order on $M$ and the causal diamonds $J(x,y) : = J^+(x) \cap J^-(y)$ are compact in $M$.
\item[(iii)] We say that it is future one-connected if there exists a timelike homotopy between any couple of future-directed timelike curves with the same endpoints. That is to say, for every couple of future-directed timelike smooth curves $\gamma, \sigma :[0,1] \rightarrow M$ such that $\gamma_0= \sigma_0$ and $\gamma_1=\sigma_1$ there exists a smooth map 
\[
\Gamma: [0,1] \times [0,1] \rightarrow M
\]
such that $\Gamma (0, \cdot)= \gamma$, $\Gamma(1, \cdot)= \sigma$ and $\Gamma(t, \cdot)$ is a timelike curve connecting $\gamma_0$ to $\gamma_1$.
\end{itemize}
\end{definition} 

We remark that on a globally hyperbolic spacetime, $\ell$ is finite and continuous \cite[Theorem 4.124]{Min19}, and between any pair of causally related points there exists a maximising geodesic connecting them \cite[Theorem 4.123]{Min19}. 
On a spacetime $(M,g)$, the Riemann tensor is defined in the usual way via the Levi-Civita connection by the formula
\begin{align}
R(X,Y)Z:=\nabla_X \nabla_Y Z - \nabla_Y \nabla_X Z - \nabla_{[X,Y]} Z \, ,
\end{align}
for $X,Y,Z \in \mathfrak{X}(M)$. 
Given a point $p \in M$ and $v,w \in T_pM$ such that they span a non-degenerate plane $\Pi$, the sectional curvature of $\Pi$ is defined by 
\begin{equation}
K(\Pi) := \frac{g(R(w,v)v,w)}{g(v,v) g(w,w) - g(v,w)^2} \, .
\end{equation}
It is easily seen that this definition is independent of the choice of generators of $\Pi$. 
Such a plane $\Pi \subset T_pM$ is called timelike provided $g_p |_{\Pi}$ has Lorentzian signature $(+,-)$.

\begin{definition}[Timelike sectional curvature bound]
\label{def: tl sec curv bd}
A spacetime $(M,g)$ is said to have a timelike sectional curvature bound from above (resp.\ below) by a constant $K \in \mathbb{R}$, denoted $\TSec \leq K$ (resp.\ $\geq$), if for all $p \in M$ and all timelike planes $\Pi \subseteq T_pM$, we have $K(\Pi) \leq K$ (resp.\ $\geq$).
\end{definition}

\begin{remark}[On the convention of signature]
In our signature convention, a bound from above (resp.\ below) on timelike sectional curvatures corresponds to a timelike curvature bound from above (resp.\ below) in the synthetic sense as pioneered in \cite{KS18}. 
Moreover, a space with curvature bounded, say, above by $K$ also has curvature bounded above by $K'$ for all $K' > K$, as one would intuitively expect. 
DeSitter space has constant curvature $K=-1$, which underlines the Riemannian expectation that negative curvature leads to the absence of conjugate points. 
See also \cite[Section 2.1]{GRZ26+} for more analytically motivated arguments for our signature convention and notations. 
\hfill$\blacksquare$ 
\end{remark}

Next, we recall a few basics about non-smooth Lorentzian geometry. We refer to \cite{KS18, BS23, BKR24} for more details. 

\begin{definition}[\LpLSn]
A Lorentzian pre-length space is a quintuple $(X,\mathsf d,\ll, \leq ,\ell)$ such that $(X,\mathsf d)$ is a metric space, $\leq$ is a reflexive and transitive relation, $\ll$ is a transitive relation contained in $\leq$, and the time separation function $\ell: X \times X \to [0,+\infty]$ satisfies $\ell(x,z) \geq \ell(x,y) + \ell(y,z)$ whenever $x \leq y \leq z$ and $\ell(x,y) > 0$ if and only if $x \ll y$.
\end{definition} 

The distance $\mathsf d$ is not important for us and serves mostly as a background tool. 
See \cite[Remark 2.8]{Octet} and \cite[Section 4.1]{BHNR25}  for how this definition relates to other notions of non-smooth spacetimes. 
Usually, in the non-smooth setting, the term geodesic refers to a globally maximising curve, while a geodesic on a spacetime, as a solution of the geodesic equation, is only a local maximiser of the time separation function. 
To minimise confusion, we try to emphasise the terminology `maximising geodesic'. 
In any case, for a maximising geodesic between $x$ and $y$, we use the notation $[x,y]$. 

Let us also recall the law of cosines, cf.\ \cite[Lemma 2.4]{BS23}, although only the case $K=0$ will be needed for our arguments.  
Let $p, q, r \in \mathbb{R}^{1,1}$ form an admissible causal triangle\footnote{By this we mean a triangle where at most one of the sides is allowed to be null. } (not necessarily in this order). 
Let $a = \max\{\ell (p, q), \ell (q, p)\} > 0$, $b = \max\{\ell (q, r), \ell (r, q)\} > 0$ and $c = \max \{\ell (p, r), \ell (r, p)\}$. 
Let $\omega=\ma_q(p,r)$ be the hyperbolic angle at $q$, and let $\sigma$ be its sign. 
Then we have: 
\begin{equation}
\label{eq: LOC}
a^2 + b^2 = c^2 - 2ab\sigma \cosh(\omega) \, .
\end{equation}

For the sake of  completeness, we restate the definition of triangle comparison here. 
For all other equivalent notions that we employ in this work, we refer the reader to \cite{BKR24}. 
The constant $D_K$ is given by $\frac{\pi}{\sqrt{K}}$ if $K > 0$, and $+ \infty$ if $K \leq 0$. 
By $\lm{K}$ we denote the Lorentzian model space of constant curvature $K$.

\begin{definition}[Triangle comparison]
\label{def: triangle comparison}
Let $X$ be a \LpLSn. An open subset $U$ is called a $(\leq K)$-comparison neighbourhood in the sense of triangle comparison if:
\begin{enumerate}[label=(\roman*)]
\item $\ell$ is continuous on $(U\times U) \cap \ell^{-1}([0,D_K))$, and this set is open; 
\item For all $x \ll y$ in $U$ with $\ell(x,y)<D_K$, there exists a connecting maximising geodesic inside $U$;
\item Let $\Delta (x,y,z)$ be a timelike triangle in $U$, with $p,q$ two points on the sides of $\Delta (x,y,z)$. Let $\Delta(\bar{x}, \bar{y}, \bar{z})$ be a comparison triangle in $\lm{K}$ for $\Delta (x,y,z)$ and $\bar{p},\bar{q}$ comparison points for $p$ and $q$, respectively. Then 
\begin{equation}
\label{eq: timelike triangle comparison inequality}
\ell(p,q) \geq \ell(\bar{p}, \bar{q}) \, .
\end{equation}
\end{enumerate}
$X$ is said to have timelike curvature bounded from above by $K$ if it is covered by $(\leq K)$-comparison neighbourhoods. 
$X$ is said to have timelike curvature bounded globally from above if $X$ itself is a comparison neighbourhood. 
\end{definition}

Note that within a $(\leq K)$-comparison neighbourhood, $\bar{p} \ll \bar{q}$ implies $p \ll q$. 
If $K=0$, we may refer to $X$ as having non-negative (resp.\ non-positive) curvature or as being non-negatively (resp.\ non-positively) curved. 
We may also call a triangle in a space $X$ flat if it satisfies equality in \eqref{eq: timelike triangle comparison inequality} with respect to its Euclidean comparison triangle for all comparison points. 
A globalisation theorem for curvature bounds from above has been recently established in \cite{EG25+}. 

Finally, recall that due to \cite{Harris} and the more recent \cite[Theorems 3.1 \& 3.2]{BKOR25}, Definition \ref{def: tl sec curv bd} and Definition \ref{def: triangle comparison} are equivalent for strongly causal spacetimes.

\section{The Ptolemy inequality}

In this section we give the definitions of the Ptolemy inequality and hyperbolic inversion on \LpLSsn. 
We also discuss some initial results.

\subsection{First properties and stability}

\begin{definition}[The Ptolemy inequality] \label{def: Ptolemy def}
A \LpLS $X$ is called Ptolemaic if for all points $x \leq y \leq z \leq w$ in $X$, the following inequality holds: 
\begin{equation} 
\label{eq: Ptolemy}
\ell(x,z) \, \ell(y,w) \geq \ell(x,y) \, \ell(z,w) + \ell(x,w) \, \ell(y,z) \, .
\end{equation}
\end{definition}


\begin{remark}[Basic properties of Ptolemaic spaces]
\label{rem: Ptolemy elem prop}
\mbox{}\par
\begin{itemize}
\item[(i)]
(Invariance by isometries) It is clear that the Ptolemy inequality and its equality cases are stable under isometries (where an isometry in the non-smooth setting is understood as a $\leq$- and $\ell$-preserving map). 

\item[(ii)] (Restrictions and rescaling) Any subset of a Ptolemaic \LpLSn, endowed with the relative topology and the restricted time separation function, is still a Ptolemaic \LpLSn. 
Moreover, a \LpLS $X$ is Ptolemaic if and only if the rescaled space $\lambda X$ is Ptolemaic for any $\lambda >0$.

\item[(iii)] (Restricting to timelike relations) Note that one can without loss of generality assume that $y \ll z$ holds for the quadruple considered in \eqref{eq: Ptolemy}. 
Indeed, otherwise, \eqref{eq: Ptolemy} reduces to $\ell(x,z)\ell(y,w) \geq \ell(x,y)\ell(z,w)$, which is trivially satisfied by the reverse triangle inequality. 

\item[(iv)] (Some equality cases) It is clear that four aligned points along a future-directed maximising causal geodesic in any \LpLS satisfy equality in the Ptolemy inequality. Moreover, equality is also trivially attained whenever two or more points do coincide.
\end{itemize}
\hfill$\blacksquare$ 
\end{remark}

\begin{remark}[Busemann concavity, stability and Finsler spacetimes]
Busemann concavity is another notion of (non-positive) curvature, well-suited for the setting of Finsler spacetimes. 
If the spacetime is genuinely Lorentzian, then Busemann concavity is equivalent to non-positive sectional curvature (since then the flag curvature agrees with the sectional one), cf.\ \cite{BEOR25+}. 

There are several notions of Lorentzian Gromov-Hausdorff (LGH for short) convergence in the literature, see for instance \cite{MinSuhr, MS}. Busemann concavity is not stable under any such convergence. 
Indeed, the family of hyperbolic $p$-norms 
\begin{equation*}
\|v\|_p:=\left(|v_0|^p - \sum_{i=1}^n|v_i|^p \right) ^{\frac1p}
\end{equation*}
on $\mathbb{R}^{1,n}$ for $p \in (1, + \infty)$ is easily seen to be Busemann concave. 
Yet, as $p \to 1$, they converge to a norm for which geodesics are no longer unique, in particular, Busemann concavity cannot hold. 
\hfill$\blacksquare$ 
\end{remark}

In contrast, the Ptolemaic condition is stable under LGH-convergence since one can pass to the limit in \eqref{eq: Ptolemy}. 
This fact suggests that Ptolemaic spaces shall single out Lorentzian manifolds out of Lorentz--Finsler ones as pointed out by \cite[Theorem 1.2]{BuFaWra} in the Riemannian setting.
Below, we will give a proof using the notion of convergence introduced in \cite{MS}. 



\begin{proposition}[Stability of Ptolemaic spaces]
\label{lem: ptolemy stability}
Let \((X_n, \ell_n)\) and \((X, \ell)\) be Lorentzian pre-length spaces and suppose that $\ell$ is continuous on $X$.
Suppose that \((X_n, \ell_n)\xrightarrow{\mathrm{LGH}} (X, \ell)\) and \((X_n,\ell_n)\) is Ptolemaic for all \(n\). 
Then \((X, \ell)\) is also Ptolemaic.
\end{proposition}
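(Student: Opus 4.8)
The plan is to transfer \eqref{eq: Ptolemy} from the approximating spaces to the limit: fix a causally ordered quadruple in $X$, approximate it by quadruples in the $X_n$ on which the Ptolemy inequality is available, and pass to the limit. So let $x\le y\le z\le w$ in $X$. By Remark~\ref{rem: Ptolemy elem prop}(iii) we may assume $y\ll z$, hence $\ell(y,z)>0$. Applying the reverse triangle inequality along the subchains $x\le y\le z$, $y\le z\le w$ and $x\le z\le w$ gives $\ell(x,z)\ge\ell(y,z)>0$, $\ell(y,w)\ge\ell(y,z)>0$ and $\ell(x,w)\ge\ell(x,z)>0$; thus four of the six pairs among $\{x,y,z,w\}$ --- namely $(x,z)$, $(y,w)$, $(x,w)$ and $(y,z)$ --- are automatically timelike related, and the only pairs that may fail to be are the two ``short sides'' $(x,y)$ and $(z,w)$.

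First I would settle the principal case in which $x\ll y$ and $z\ll w$ also hold, so that all six pairs are related by $\ll$. Using the correspondences (equivalently, the $\eps_n$-approximating maps with $\eps_n\to 0$) underlying the convergence $(X_n,\ell_n)\xrightarrow{\mathrm{LGH}}(X,\ell)$ of \cite{MS}, pick $x_n,y_n,z_n,w_n\in X_n$ in $\eps_n$-correspondence with $x,y,z,w$; by the definition of the convergence together with continuity of $\ell$, the value of $\ell_n$ on each of the six pairs converges to the value of $\ell$ on the corresponding pair in $X$. Since all six limiting values are strictly positive, for $n$ large we have $\ell_n(x_n,y_n),\ell_n(y_n,z_n),\ell_n(z_n,w_n)>0$ and hence $x_n\ll y_n\ll z_n\ll w_n$, in particular $x_n\le y_n\le z_n\le w_n$ in $X_n$. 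Apply \eqref{eq: Ptolemy} in $X_n$ and let $n\to\infty$ to obtain \eqref{eq: Ptolemy} for $(x,y,z,w)$.

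It remains to remove the extra hypotheses $x\ll y$ and $z\ll w$. The idea is to perturb the offending endpoint inside $X$: as $\ell$ is continuous, the chronological relation $\ll=\ell^{-1}((0,+\infty])$ is open, so one can choose $x_k\to x$ with $x_k\ll y$ while keeping $x_k\le y\le z\le w$ a causally ordered quadruple with $y\ll z$; the principal case then applies to each $(x_k,y,z,w)$, and a further limit $k\to\infty$ (again using continuity of $\ell$) delivers \eqref{eq: Ptolemy} for $(x,y,z,w)$, and symmetrically if instead $z\ll w$ fails. The genuinely degenerate configurations, where two of the four points coincide, satisfy equality in \eqref{eq: Ptolemy} outright, cf.\ Remark~\ref{rem: Ptolemy elem prop}(iv).

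The main obstacle is precisely the lifting of the causal order: LGH-convergence in the sense of \cite{MS} controls time separations but not the relation $\le$, so a non-strict pair $x\le y$ with $\ell(x,y)=0$ cannot in general be transported to the $X_n$. What makes the scheme work is the combination of two facts: once $y\ll z$, the reverse triangle inequality already forces four of the six pairs to be timelike, cutting the problem down to the two short sides; and continuity of $\ell$ does double duty, opening up $\ll$ (so that those short sides can be perturbed away) and legitimising the passage to the limit in the inequality. The one point that needs genuine care is that the perturbation $x_k\to x$ be taken inside the chronological past of $y$; establishing that $x$ can be approached in this way --- exploiting that $x\le y\ll z$ --- is the technical heart of the degenerate case and is where one leans on additional regularity of the limit space or on the precise form of the convergence in \cite{MS}.
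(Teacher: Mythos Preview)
Your principal case is fine in spirit, but the ``degenerate case'' contains a real gap, and the gap originates in a misconception about the Minguzzi--Suhr convergence. You assert that LGH-convergence in the sense of \cite{MS} ``controls time separations but not the relation $\le$''. This is not correct: the convergence is formulated in terms of the \emph{signed} time separation $\tilde\ell$, which equals $\ell$ on causally related pairs and $-\infty$ otherwise, so closeness of $\tilde\ell_n$ to $\tilde\ell$ transports the relation $\le$ automatically. The paper exploits exactly this: from $|\tilde\ell_n(x_i^n,x_j^n)-\tilde\ell(x_i,x_j)|<\delta$ and $x_i\le x_j$ one gets $\tilde\ell_n(x_i^n,x_j^n)>-\infty$, hence $x_i^n\le x_j^n$, with no need for the pair to be timelike.

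Because you believe the non-strict relations cannot be lifted, you are forced into the perturbation $x_k\to x$ with $x_k\ll y$, and you yourself flag this as unfinished (``is where one leans on additional regularity of the limit space''). In a bare \LpLS with merely continuous $\ell$ there is no reason such $x_k$ exist; nothing in the hypotheses guarantees that a point on $\partial J^-(y)$ can be approximated from within $I^-(y)$. So as written the proof is incomplete precisely at the step you identify. The paper sidesteps all of this by arguing by contradiction: a strict violation of \eqref{eq: Ptolemy} in $X$ survives a $\delta$-perturbation of all six entries, and the $\tilde\ell$-control then produces a violating quadruple in $X_n$ directly, causal relations included. Once you use $\tilde\ell$ rather than $\ell$, the case split into ``principal'' and ``degenerate'' disappears.
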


\begin{proof}
As in \cite[Theorem 10.4]{MS}, we only need to consider the case where all four points in question are vertices of causal diamonds of suitably chosen $\varepsilon$-nets in the definition of the convergence, cf.\ \cite[Definition 3.6]{MS}. 
For general points, one may approximate them by such vertices and then use continuity of $\ell$ to conclude. 

Suppose towards a contradiction that \(X\) is not Ptolemaic. 
Then there exist four points $x_1 \leq x_2 \ll x_3 \leq x_4$ in $X$ such that 
\begin{equation}
\label{eq: strict inequality contradiction}
\ell(x_1,x_3) \, \ell(x_2,x_4) < \ell(x_1,x_2) \, \ell(x_3,x_4) + \ell(x_1,x_4) \, \ell(x_2,x_3) \, .
\end{equation}

Since \eqref{eq: strict inequality contradiction} is a strict inequality, there exists $0 < \delta < \min\{\ell(x_i,x_j)>0,1\leq i<j \leq 4\}$ such that the inequality 
\begin{equation}
\label{yy}
 (\ell(x_1,x_3)+\delta) \, (\ell(x_2,x_4)+\delta) < (\ell(x_1,x_2)-\delta)^+ \, (\ell(x_3,x_4)-\delta)^+ + (\ell(x_1,x_4)-\delta) \, (\ell(x_2,x_3)-\delta) \, 
\end{equation}
holds, where $A^+:=\max\{A,0\}$ denotes the positive part of $A$. 

By the definition of LGH-convergence, for all sufficiently large $n$, there exist vertices $x_i^n, i=1,2,3,4$ in the respective $\varepsilon$-nets of $X_n$ such that 
\begin{equation}
\label{zz}
|\tilde{\ell}_n(x^n_i, x^n_j)-\tilde{\ell}(x_i, x_j)|<\delta,\quad 1\leq i<j \leq 4,
\end{equation}
where \[ \tilde{\ell}(x_i,x_j):=  
\begin{cases}
 \ell(x_i,x_j), &x_i\leq x_j,\\
 -\infty, & \text{otherwise}
\end{cases}
\]
and similarly for $\tilde \ell_n$. 
This immediately implies $x^n_1 \leq x^n_2 \leq x^n_3 \leq x^n_4$. 
Combining (\ref{yy}) and (\ref{zz}), we obtain
\begin{equation}
\label{xx}
\ell_n(x_1^n,x_3^n) \, \ell_n(x_2^n,x_4^n) < \ell_n(x_1^n,x_2^n) \, \ell_n(x_3^n,x_4^n) + \ell_n(x_1^n,x_4^n) \, \ell_n(x_2^n,x_3^n) \, ,
\end{equation}
which contradicts the fact that $X_n$ is Ptolemaic.  
\end{proof}

\subsection{The hyperbolic inversion}

In this section we introduce the hyperbolic inversion and discuss its properties in relation with curvature bounds on a spacetime. 
The hyperbolic inversion is a transformation closely related to the spherical inversion in metric geometry. 
A discussion of the hyperbolic inversion in Minkowski spacetime can be found in the book of Christodoulou \cite[Section 4.1]{Chr}. 
We give its definition in full generality. 
To this end, we first introduce the following terminology, cf.\ \cite[Definition 5.1]{GKS19}. 

\begin{definition}[Strong future timelike geodesic completeness]
A \LpLS $X$ is said to be (strongly) future timelike geodesically complete if every future-inextendible timelike geodesic (is maximising and) has infinite $\ell$-length.  
\end{definition} 

This is arguably a very strong condition, but it is essentially necessary for the hyperbolic inversion to be defined in a meaningful way. 
It is surely significantly more natural in a globally non-positively curved setting. 
Indeed, a space with non-positive curvature globally is already uniquely geodesic, cf.\ \cite[Theorem 4.7]{BNR25}, and any (local) geodesic is already maximising, cf.\ \cite[Remark 2.6]{EG25+}, so the only assumption apart from the curvature bound is that future-inextendible geodesics have infinite $\ell$-length. Examples of (globally non-positively curved) strongly future timelike geodesically complete spacetimes are given by Minkowski space and DeSitter space.

\begin{definition}[Hyperbolic inversion]
Let $X$ be a uniquely geodesic and strongly future timelike geodesically complete \LpLSn. 
Let $x \in X$ and $r > 0$. 
Let $p \in I^+(x)$ and denote by $\gamma_p$ the (unique and inextendible) maximising geodesic emanating from $x$ through $p$.
The map $H_{x,r} : I^+(x) \to I^+(x)$ is called the hyperbolic inversion through $x$ with radius $r$, defined by $H_{x,r}(p)=q$, where $q$ is the unique point on $\gamma_p$ such that $\ell(x,p)=\frac{r^2}{\ell(x,q)}$.
\end{definition}

Given any $r>0$ and $x \in X$, it is clear from the definition that $H_{x,r}$ is self-inverse, i.e., $H_{x,r}(H_{x,r}(y))=y$ for all $y \in I^+(x)$. 
In the next lemma, which has already been mentioned in \cite[Proposition 7, Section 4.1]{Chr}, we discuss the hyperbolic inversion in Minkowski space. 
Therein, we use the notation 
\[
\ma_x(y,z)=\arcosh \left (\frac{\langle y-x,z-x \rangle}{\|y-x\| \|z-x\|} \right )
\]
to denote the hyperbolic angle at $x$ between the points $y$ and $z$. 
We will not explicitly work with angles in a non-smooth setting, allowing us to forego that definition. 

\begin{lemma}[Hyperbolic inversion in Minkowski space]
\label{lem: hyperbolic inversion minkowski space}
Let $n \geq 1$ and let $x \ll z$ in $\mathbb R^{1,n}$ satisfy $\ell(x,z) = r$, then $H_{x,r}(I(x,z))=I^+(z)$ and $H_{x,r}(I^+(z))=I(x,z)$.
\end{lemma}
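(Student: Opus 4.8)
The plan is to work entirely in $\mathbb R^{1,n}$ with the explicit formula for the time separation, $\ell(a,b) = \|b-a\|$ whenever $a \ll b$ (and $0$ otherwise), and to exploit the fact that maximising geodesics through $x$ are straight rays $t \mapsto x + t v$ with $v$ future-directed timelike. Fix $x \ll z$ with $\|z-x\| = r$. By translation invariance of everything in sight, I would first reduce to the case $x = 0$, so that $z$ is a future-directed timelike vector with $\langle z,z\rangle = r^2$. For a point $p \in I^+(0) = I^+(x)$, write $p = \rho\, u$ where $u$ is the future unit timelike vector ($\langle u,u\rangle = 1$) along the ray from $0$ through $p$ and $\rho = \|p\| = \ell(0,p) > 0$. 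Then by definition $H_{0,r}(p) = q$ is the point on that same ray with $\ell(0,q) = r^2/\rho$, i.e. $q = \frac{r^2}{\rho}\, u = \frac{r^2}{\rho^2}\, p$. So the inversion is simply $p \mapsto \frac{r^2}{\langle p,p\rangle}\, p$ on $I^+(0)$, which also re-confirms it is an involution.

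With this formula the statement becomes an algebraic identity about timelike diamonds. The key computation is: for $p, p' \in I^+(0)$, $p \ll p'$ if and only if $p' - p$ is future-directed causal (really timelike, since we are dealing with open diamonds), and I would like to relate $q - q'$ or $z - q$ etc.\ to $p - z$, $z$, and $p$ via the inversion formula. Concretely, set $\lambda = \frac{r^2}{\langle p,p\rangle}$, so $q = \lambda p$. Then $z - q = z - \lambda p$, and since $\langle z,z\rangle = r^2 = \lambda \langle p,p\rangle$, one gets the clean relation
\begin{equation*}
\langle z - \lambda p,\; z - \lambda p\rangle = \langle z,z\rangle - 2\lambda\langle z,p\rangle + \lambda^2\langle p,p\rangle = 2\lambda\big(\langle z,z\rangle/? \big),
\end{equation*}
which after substituting $\lambda\langle p,p\rangle = r^2$ and $\langle z,z\rangle = r^2$ simplifies to $\langle z - q, z - q\rangle = 2(r^2 - \lambda\langle z,p\rangle)$; a parallel computation gives $\langle p - z, p - z\rangle = 2(\langle p,p\rangle - \langle z,p\rangle)$ after using $\langle z,z\rangle = r^2 \le \langle p,p\rangle$ is not needed — rather one uses that $p \in I^+(z)$ iff $p - z$ is future timelike. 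I would set up the chain of equivalences: $p \in I(0,z) = I^+(0) \cap I^-(z)$ $\iff$ $p \in I^+(0)$ and $z - p$ future timelike; and then show this is equivalent to $q = \lambda p$ satisfying $q - z$ future timelike, i.e.\ $q \in I^+(z)$. The sign bookkeeping — that $\lambda > 0$ and that the reversed-Cauchy–Schwarz inequality for future timelike vectors controls the sign of $\langle z,p\rangle$ relative to $r^2$ and $\langle p,p\rangle$ — is what makes the causal relations flip correctly.

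Having established $H_{0,r}(I(0,z)) \subseteq I^+(z)$ and, by the same argument run with $z$ and $p$ interchanged (noting $H_{0,r}$ is an involution and that $\|z\| = r$ puts $z$ on the "inversion sphere"), $H_{0,r}(I^+(z)) \subseteq I(0,z)$, the two inclusions together with involutivity upgrade to the claimed equalities. I expect the main obstacle to be the careful verification that the images land in the \emph{open} diamonds and that no boundary or degenerate (null) cases sneak in: one has to check $q \ne z$ when $p \ne z$, that $q$ stays in $I^+(0)$ (clear, since $\lambda > 0$ and $u$ is unchanged), and that the strict timelike inequalities are preserved — this is where I would lean on the fact that $p - z$ being future timelike is an open condition and that $\lambda$ depends continuously and positively on $p$. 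A secondary subtlety is making sure the "unique maximising geodesic through $x$ and $p$" really is the straight ray, which is immediate in Minkowski space but worth a sentence. Everything else is a short reversed-Cauchy–Schwarz argument plus the algebra above.
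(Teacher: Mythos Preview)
Your approach is correct and essentially the same as the paper's: both reduce to $x=0$, derive the scaling identity $\|q-z\|^2 = \lambda\,\|z-p\|^2$ with $\lambda = r^2/\|p\|^2>0$ (the paper phrases this via the law of cosines in the timelike $2$-plane through $z$ and $p$, obtaining $c' = c/b$), and read off the equivalence $p \ll z \Leftrightarrow z \ll q$. The only cosmetic difference is that you compute with the Minkowski inner product directly in $\mathbb{R}^{1,n}$, whereas the paper first restricts to the $2$-plane; both versions are equally light on the time-orientation check you flag, which is easily settled by noting that $p\in I(0,z)$ forces $\ell(0,p)<r$ and hence $\ell(0,q)>r$, ruling out $q\ll z$.
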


\begin{proof}
Without loss of generality, let $x$ be the origin in $\mathbb R^{1,n}$ and set $r=1$. Let $p \in I(x,z)$ and $q=H_{x,r}(p)$. 
Call $b=\ell(x,q), c= \ell(z,q), c'=\ell(p,z)$ and $\alpha=\measuredangle_x(z,q)$, then $\ell(x,p)=\frac1b$. Consider the triangle $\Delta(x,z,q)$ and notice that this triangle fits in the timelike plane inside $\mathbb R^{1,n}$ spanned by the timelike vectors $z-x$ and $p-x$. Plugging into the law of cosines, we have
\begin{equation*}
c^2 = 1 + b^2 - 2b\cosh(\alpha) = 1 + b(b-2\cosh(\alpha)) \, .
\end{equation*}
On the other hand, the law of cosines also yields 
\begin{equation*}
(c')^2 = 1 + \left(\frac1b\right)^2 - 2\frac1b\cosh(\alpha) = 1 + \frac1b \left( \frac1b-2\cosh(\alpha) \right) \, .
\end{equation*}
In particular, $c'= \frac{c}{b}$ and thus $c>0$ if and only if $c'>0$. 
\end{proof} 

The following proposition generalises the previous Lemma \ref{lem: hyperbolic inversion minkowski space} in the case of \LpLS with bounded curvature. 

\begin{proposition}[Hyperbolic inversion in curved spaces] 
Let $X$ be a future timelike geodesically complete \LpLS with non-positive curvature globally. 
Let $x \in X, r >0$ and let $z \in I^+(x)$ be such that $\ell(x,z)=r$. 
Then $H_{x,r}(I^+(z)) \subseteq I(x,z)$. 
\end{proposition}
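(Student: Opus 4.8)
The plan is to transport the Minkowski computation of Lemma~\ref{lem: hyperbolic inversion minkowski space} to the curved setting via triangle comparison with the flat model space $\lm{0}=\mathbb R^{1,1}$. Fix $x$, $r$, $z$ as in the statement, let $p\in I^+(z)$ and set $q=H_{x,r}(p)$. By transitivity $x\ll z\ll p$, so $x\ll p$, and the hyperbolic inversion gives us the unique point $q$ on the maximising geodesic $\gamma_p$ from $x$ through $p$ with $\ell(x,p)\,\ell(x,q)=r^2$. We must show $q\in I(x,z)=I^+(x)\cap I^-(z)$; the membership $q\in I^+(x)$ is immediate from the construction, so the real content is $q\ll z$, i.e.\ $\ell(q,z)>0$.

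The key step is to compare the triangle $\Delta(x,z,p)$ (which is timelike, since all three pairs are timelike-related) with its Euclidean comparison triangle $\Delta(\bar x,\bar z,\bar p)$ in $\mathbb R^{1,1}$. First I would check that $q$ lies on the side $[x,p]$ of $\Delta(x,z,p)$: by global non-positive curvature the space is uniquely geodesic, so $\gamma_p$ restricted between $x$ and $p$ is \emph{the} side $[x,p]$, and since $\ell(x,q)=r^2/\ell(x,p)\le r^2/\ell(x,p)$... more precisely one needs $\ell(x,q)\le\ell(x,p)$, which holds because $\ell(x,p)=\ell(x,z)+\ell(z,p)\ge r$ forces $\ell(x,q)=r^2/\ell(x,p)\le r$, hence $q$ is on the segment from $x$ towards $p$ at parameter $\ell(x,q)\le r\le\ell(x,p)$. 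Let $\bar q$ be the comparison point for $q$ on $[\bar x,\bar p]$, so $\ell(\bar x,\bar q)=\ell(x,q)$ and $\ell(\bar x,\bar p)=\ell(x,p)$, whence also $\ell(\bar x,\bar q)\,\ell(\bar x,\bar p)=r^2$. Now pick the comparison point $\bar w$ on $[\bar x,\bar p]$ with $\ell(\bar x,\bar w)=\ell(x,z)=r$; this is exactly the comparison setup of Lemma~\ref{lem: hyperbolic inversion minkowski space} applied in $\mathbb R^{1,1}$ with base point $\bar x$, radius $r$, and the point $\bar p$, showing $\bar q=H_{\bar x,r}(\bar p)$ and $\bar q\in I(\bar x,\bar w)$, in particular $\ell(\bar q,\bar w)>0$. (Strictly, I would run the flat law-of-cosines computation directly with $\alpha=\ma_{\bar x}(\bar p,\bar z)$ rather than invoking the lemma verbatim, since the lemma is phrased for the inversion of a genuine diamond vertex; but it is the same two applications of \eqref{eq: LOC} yielding $\ell(\bar q,\bar z)=\ell(\bar z,\bar p)\cdot\ell(\bar x,\bar q)/\ell(\bar x,\bar p)=\ell(\bar z,\bar p)\cdot r^2/\ell(x,p)^2>0$.)

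It remains to pull this back. By the triangle comparison inequality \eqref{eq: timelike triangle comparison inequality} applied to the comparison points $\bar z$ (a vertex) and $\bar q$ (on the side $[\bar x,\bar p]$), we get $\ell(z,q)\ge\ell(\bar z,\bar q)>0$, hence $z$ and $q$ are timelike related. But the comparison only gives $\ell(z,q)\ge\ell(\bar z,\bar q)$ in one direction; to conclude $q\ll z$ rather than $z\ll q$, I would argue from causality: $q$ lies on the geodesic $[x,p]$ between $x$ and $p$, and $z$ also lies on $[x,p]$ in $\mathbb R^{1,1}$ between $\bar x$ and $\bar p$ — one needs the ordering of $q$ and $z$ along the chain from $x$ to $p$. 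Since $\ell(x,z)=r$ and $\ell(x,q)=r^2/\ell(x,p)\le r$ with equality iff $\ell(x,p)=r$ iff $p=z$, we have $\ell(x,q)\le\ell(x,z)$, so by the concatenation/ordering along a maximising geodesic $q$ comes no later than $z$; combined with $\ell(q,z)>0$ (which would be vacuous if they were unrelated or if $z\le q$ strictly, given $q\le$-precedes-or-equals $z$ on the geodesic this forces $q\ll z$), we conclude $q\in I^-(z)$. Therefore $q\in I(x,z)$.

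The main obstacle I anticipate is precisely this last causal-ordering bookkeeping: triangle comparison is one-sided and a priori only controls $\ell(z,q)$ from below, not which of $z,q$ is to the future of the other, so one must separately and carefully exploit that both $z$ and $q$ lie on the \emph{same} maximising geodesic $[x,p]$ emanating from $x$ and that $\ell(x,q)\le\ell(x,z)$ to fix the orientation. A secondary technical point is verifying that $\Delta(x,z,p)$ is a genuine timelike triangle to which Definition~\ref{def: triangle comparison} applies (all three sides timelike — true since $x\ll z\ll p$ gives $x\ll p$ — and existence of the connecting maximising geodesics, which is free from unique geodesy under the global non-positive curvature hypothesis), and that the comparison point $\bar q$ one obtains abstractly really is the point at flat arclength $\ell(x,q)$ along $[\bar x,\bar p]$, so that the flat inversion identity applies on the nose.
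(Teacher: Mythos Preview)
Your approach is essentially the same as the paper's: form the comparison triangle $\Delta(\bar x,\bar z,\bar p)$ in $\mathbb R^{1,1}$, observe that the comparison point $\bar q$ for $q$ on $[\bar x,\bar p]$ is exactly $H_{\bar x,r}(\bar p)$, invoke the Minkowski lemma to get $\ell(\bar q,\bar z)>0$, and then apply triangle comparison. The core argument is correct.

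However, your ``main anticipated obstacle'' is a phantom created by applying the comparison inequality in the wrong order. You write $\ell(z,q)\ge\ell(\bar z,\bar q)$, but since $\bar q\ll\bar z$ in the model, $\ell(\bar z,\bar q)=0$ and this is vacuous. The comparison condition in Definition~\ref{def: triangle comparison} is \emph{directional}: one chooses the pair in either order, so applying it as $\ell(q,z)\ge\ell(\bar q,\bar z)>0$ immediately yields $q\ll z$ (this is exactly the remark following Definition~\ref{def: triangle comparison} that $\bar p\ll\bar q$ implies $p\ll q$). No further causal bookkeeping is needed, and the paper's proof ends right there.

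Your proposed fix for this non-obstacle is moreover incorrect: you assert that ``both $z$ and $q$ lie on the same maximising geodesic $[x,p]$'', but $z$ is in general \emph{not} on $[x,p]$---it is an arbitrary point with $\ell(x,z)=r$, not a point along $\gamma_p$. The detour through $\bar w$ is likewise unnecessary: the vertex $\bar z$ itself already satisfies $\ell(\bar x,\bar z)=r$ and plays the role of the ``sphere point'' in the flat inversion lemma.
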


\begin{proof}
Let $q \in I^+(z)$ and let $p=H_{x,r}(q)$. By construction we have  $p \in I^+(x)$. 
Consider the triangle $\Delta(x,z,q)$ in $X$ and its comparison triangle $\Delta(\bx,\bz,\bq)$ in the Minkowski plane. 
Since $p \in [x,q]$, there exists a comparison point $\bp \in [\bx,\bq]$ for $p$. 
Note that $H_{\bx, r}(\bq)=\bp$, see Figure \ref{fig: CAT inversion}. By assumption, $\ell(\bz,\bq)=\ell(z,q) > 0$ and hence by Lemma \ref{lem: hyperbolic inversion minkowski space} also $\ell(\bp,\bz) >0$. 
\begin{figure}
\begin{center}
\begin{tikzpicture}
\clip(-2.650284837275845,-0.3412336963647125) rectangle (3.05955569310921267,3.063627650137573);
\draw [samples=50,domain=-0.75:0.75,rotate around={90:(0,0)},xshift=0cm,yshift=0cm,line width=0.7pt] plot ({1*(1+(\x)^2)/(1-(\x)^2)},{1*2*(\x)/(1-(\x)^2)});
\draw [line width=0.7pt] (-0.38494639860674784,0.38494639860674784)-- (0.2644946687855832,1.0343874659990788);
\draw [line width=0.7pt] (0.2644946687855832,1.0343874659990788)-- (0.649441067392331,0.649441067392331);
\draw [line width=0.7pt] (0.649441067392331,0.649441067392331)-- (0,0);
\draw [line width=0.7pt] (0,0)-- (-0.38494639860674784,0.38494639860674784);
\draw [line width=0.7pt] (0.2644946687855832,1.0343874659990788)-- (-0.1761751732977016,1.7653766072101036);
\draw [line width=0.7pt] (0.2644946687855832,1.0343874659990788)-- (0,0);
\draw [dotted, line width=0.7pt,domain=0:3.5955569310921267] plot(\x,{(-0--0.649441067392331*\x)/0.649441067392331});
\draw [dotted, line width=0.7pt,domain=-3.050284837275845:0] plot(\x,{(-0--0.38494639860674784*\x)/-0.38494639860674784});
\draw [dotted, line width=0.7pt,domain=-0.28494639860674784:3.5955569310921267] plot(\x,{(--0.5--0.649441067392331*\x)/0.649441067392331});
\draw [dotted, line width=0.7pt,domain=-3.050284837275845:0.649441067392331] plot(\x,{(-0.5--0.38494639860674784*\x)/-0.3849463986067478});
\draw [dashed, line width=0.7pt,domain=-1.050284837275845:0] plot(\x,{(-0--1.7653766072101036*\x)/-0.1761751732977016});
\draw [line width=0.7pt] (-0.05709745903682406,0.5721493932883298)-- (0.2644946687855832,1.0343874659990788);
\begin{scriptsize}
\coordinate [circle, fill=black, inner sep=0.7pt, label=270: {$\bx$}] (x) at (0,0);
\coordinate [circle, fill=black, inner sep=0.7pt, label=90: {$\bz$}] (z) at (0.2644946687855832,1.0343874659990788);
\coordinate [circle, fill=black, inner sep=0.7pt, label=45: {$\bq$}] (q) at (-0.1761751732977016,1.7653766072101036);
\coordinate [circle, fill=black, inner sep=0.7pt, label=135: {$\bp$}] (p) at (-0.05709745903682406,0.5721493932883298);
\end{scriptsize}
\draw [line width=0.7pt] (x) -- (q);
\end{tikzpicture}
\end{center}
\caption{Hyperbolic inversion in the comparison configuration in the Minkowski plane.}
\label{fig: CAT inversion}
\end{figure}
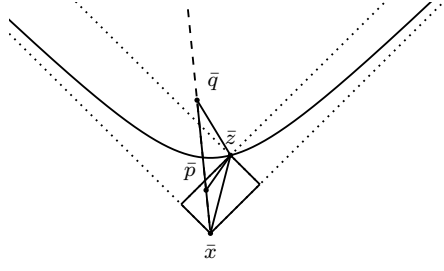
Then by triangle comparison, we have $0 <\ell(\bp,\bz) \leq \ell(p,z)$. 
This shows $p \in I(x,z)$ and hence $H_{x,r}(I^+(z)) \subseteq I(x,z)$. 
\end{proof}

In the Euclidean setting, one can pull back the Euclidean metric to induce a new distance on the space after inverting with respect to the unit sphere. 
This has been generalised to metric spaces in \cite{BHX08}. 
An analogous construction is possible in Lorentzian signature as well. 

\begin{definition}[Inversion time separation function]
Let $X$ be a uniquely geodesic and strongly future timelike geodesically complete \LpLSn. 
Let $p \in X$. 
The inversion time separation function (with respect to $p$) is given by $i_p: I^+(p) \times I^+(p) \to [0,+\infty]$, where
\begin{equation}
\label{eq: inversion ell}
    i_p(x,y) := \frac{\ell(y,x)}{\ell(p,x) \, \ell(p,y)} \, .
\end{equation} 
Moreover, we define a Lorentzian structure associated to this inversion by simply reversing the relations: define $x \leq_{i_p} y$ if and only if $y \leq x$ and similarly for the timelike relation. 
\end{definition}

An easy calculation shows the following identity in Minkowski space: 
\[
i_p(y,x)=\ell(H_{p,1}(x),H_{p,1}(y)), \qquad\text{see  Figure \ref{fig: inversion time sep}} \, 
. \]

\begin{figure}
\begin{center}
\begin{tikzpicture}
\clip(-2.5967881962699697,-0.3643638569956612) rectangle (2.444680343455278,3.590423482270273);
\draw [dotted, line width=0.7pt,domain=0:4.444680343455278] plot(\x,{(-0--1*\x)/1});
\draw [dotted, line width=0.7pt,domain=-3.5967881962699697:0] plot(\x,{(-0--1*\x)/-1});
\draw [dotted, samples=50,domain=-0.99:0.99,rotate around={90:(0,0)},xshift=0cm,yshift=0cm,line width=0.7pt] plot ({1*(1+(\x)^2)/(1-(\x)^2)},{1*2*(\x)/(1-(\x)^2)});
\draw [samples=50,domain=-0.99:0.99,rotate around={90:(0,0)},xshift=0cm,yshift=0cm,line width=0.7pt] plot ({1*(-1-(\x)^2)/(1-(\x)^2)},{1*(-2)*(\x)/(1-(\x)^2)});
\draw [dashed, line width=0.7pt,domain=-3.5967881962699697:0] plot(\x,{(-0--1.1877262874477794*\x)/-0.36865297960480825});
\draw [dashed, line width=0.7pt,domain=0:4.444680343455278] plot(\x,{(-0--3.087864414980768*\x)/0.8438661017767403});
\draw [line width=0.7pt] (-0.36865297960480825,1.1877262874477794)-- (0.8438661017767403,3.087864414980768);
\draw [line width=0.7pt] (0.09564610128564215,0.3499870322670203)-- (-0.28918751429546186,0.9317044259308644);
\begin{scriptsize}
\coordinate [circle, fill=black, inner sep=0.7pt, label=135: {$H_{p,1}(x)$}] (0) at  (-0.36865297960480825,1.1877262874477794);
\coordinate [circle, fill=black, inner sep=0.7pt, label=270: {$p$}] (0) at  (0,0);
\coordinate [circle, fill=black, inner sep=0.7pt, label=0: {$H_{p,1}(y)$}] (0) at  (0.8438661017767403,3.087864414980768);
\coordinate [circle, fill=black, inner sep=0.7pt, label=180: {$x$}] (0) at  (-0.28918751429546186,0.9317044259308644);
\coordinate [circle, fill=black, inner sep=0.7pt, label=45: {$y$}] (0) at (0.09564610128564215,0.3499870322670203);
\end{scriptsize}
\end{tikzpicture}
\end{center}
\caption{The inversion time separation in the Minkowski plane can be calculated via the law of cosines.}
\label{fig: inversion time sep}
\end{figure}
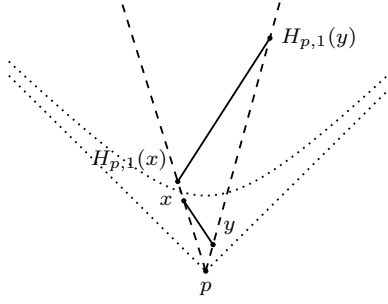

In general, $i_p$ may fail to satisfy the reverse triangle inequality. As in the positive signature, there is an immediate relationship between the original time separation being Ptolemaic and the inversion satisfying the reverse triangle inequality. 

\begin{proposition}[Hyperbolic inversion and being Ptolemaic]
Let $X$ be a uniquely geodesic and strongly future timelike geodesically complete \LpLSn. 
Then $(I^+(p),d, \ll_{i_p}, \leq_{i_p}, i_p)$ is a \LpLS if and only if $X$ is Ptolemaic. 
In particular, if $X$ is globally hyperbolic then so is $(I^+(p),d, \ll_{i_p}, \leq_{i_p}, i_p)$. 
\end{proposition}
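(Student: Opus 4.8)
The plan is to unwind the definition of a \LpLSn\ for the tuple $(I^+(p),d, \ll_{i_p}, \leq_{i_p}, i_p)$ and observe that all the structural conditions are automatic except the reverse triangle inequality for $i_p$, which will turn out to be exactly the Ptolemy inequality for $X$. First I would check the easy parts: $(I^+(p),d)$ is a metric space as a subset of $X$; the relation $\leq_{i_p}$ (defined as the reverse of $\leq$) is reflexive and transitive because $\leq$ is; $\ll_{i_p}$ is transitive and contained in $\leq_{i_p}$ for the same reason; and $i_p(x,y) > 0$ if and only if $\ell(y,x) > 0$ (since $\ell(p,x),\ell(p,y) > 0$ on $I^+(p)$), i.e.\ if and only if $y \ll x$, i.e.\ if and only if $x \ll_{i_p} y$. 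So the only nontrivial axiom is: for $x \leq_{i_p} y \leq_{i_p} z$ with $i_p(x,y) > 0$, one has $i_p(x,z) \geq i_p(x,y) + i_p(y,z)$.

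Next I would translate this inequality. Writing it out, $x \leq_{i_p} y \leq_{i_p} z$ means $z \leq y \leq x$ in $X$, and since we are also assuming $i_p(x,y) > 0$, i.e.\ $y \ll x$, and all four points $p \leq z \leq y \leq x$ are causally ordered with $p \in I^+$-past of all of them, the claimed inequality reads
\begin{equation*}
\frac{\ell(z,x)}{\ell(p,x)\,\ell(p,z)} \geq \frac{\ell(y,x)}{\ell(p,x)\,\ell(p,y)} + \frac{\ell(z,y)}{\ell(p,y)\,\ell(p,z)} \, .
\end{equation*}
Multiplying through by $\ell(p,x)\,\ell(p,y)\,\ell(p,z) > 0$ gives
\begin{equation*}
\ell(z,x)\,\ell(p,y) \geq \ell(y,x)\,\ell(p,z) + \ell(z,y)\,\ell(p,x) \, ,
\end{equation*}
which is precisely the Ptolemy inequality \eqref{eq: Ptolemy} applied to the causal chain $p \leq z \leq y \leq x$ (with the roles $x \rightsquigarrow p$, $y \rightsquigarrow z$, $z \rightsquigarrow y$, $w \rightsquigarrow x$). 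Conversely, if the reverse triangle inequality holds for $i_p$ for every choice of $p$, then given any causal chain $a \leq b \leq c \leq d$ in $X$ one may apply the computation above with $p = a$, $x = d$, $y = c$, $z = b$ to recover \eqref{eq: Ptolemy}; by Remark \ref{rem: Ptolemy elem prop}(iii) it suffices to treat the case where the middle relation is timelike, which guarantees $i_p(x,y) > 0$ so that the reverse triangle inequality is genuinely applicable. This establishes the equivalence. For the last sentence, if $X$ is globally hyperbolic, then in particular it is Ptolemaic would need to be checked --- but wait, that is not given; rather one argues directly that global hyperbolicity of $X$ forces the same for the inversion: $\leq_{i_p}$ inherits antisymmetry from $\leq$, and a causal diamond $J_{i_p}(x,y) = \{w : x \leq_{i_p} w \leq_{i_p} y\} = \{w : y \leq w \leq x\} = J(y,x) \cap I^+(p)$ is the intersection of a compact set with an open set, hence relatively compact; one then needs to verify it is actually closed in $I^+(p)$, which follows since $J(y,x) \subseteq I^+(p)$ already when $y \in I^+(p)$ (because $y \ll$ everything above it, and $p \ll y$), so $J_{i_p}(x,y) = J(y,x)$ is genuinely compact.

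**Main obstacle.** The conceptual content is minimal; the arithmetic rearrangement is routine. The only points requiring care are bookkeeping ones: making sure that on $I^+(p)$ all the denominators $\ell(p,\cdot)$ appearing in $i_p$ are strictly positive and finite (positivity is automatic from $I^+(p)$; finiteness is not needed for the pre-length-space axioms but is worth noting), correctly matching the hypothesis $i_p(x,y) > 0$ in the reverse triangle inequality with the "$y \ll z$ without loss of generality" reduction of Remark \ref{rem: Ptolemy elem prop}(iii) so that the two directions of the equivalence use exactly the same family of quadruples, and verifying in the global hyperbolicity clause that $J(y,x) \subseteq I^+(p)$ so the diamond for the inverted structure is honestly compact (not merely relatively compact) --- this last point uses $p \ll y$ and transitivity.
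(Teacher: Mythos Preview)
Your approach is exactly the paper's: check that all \LpLS axioms for $(I^+(p),i_p)$ are automatic except the reverse triangle inequality, then clear denominators to see that the latter is precisely Ptolemy for the quadruple $p \ll z \leq y \leq x$, and for global hyperbolicity observe that $\leq_{i_p}$ inherits antisymmetry and that $J_{i_p}(x,y)=J(y,x)\subseteq I^+(p)$. Your write-up is in fact more explicit than the paper's on the diamond argument; the only slip is that the reverse triangle inequality in a \LpLS is demanded for \emph{all} causal triples without any positivity hypothesis, so your invocation of Remark~\ref{rem: Ptolemy elem prop}(iii) to secure $i_p(x,y)>0$ is unnecessary (and in any case that remark yields $b\ll c$, not the $c\ll d$ you would need).
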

\begin{proof}
Every property of a \LpLS except the reverse triangle inequality for $i_p$ is trivially satisfied. 
A simple calculation shows that $\ell$ being Ptolemaic is equivalent to $i_p$ satisfying the reverse triangle inequality. 
Indeed, simply plugging \eqref{eq: inversion ell} into $i_p(x,z) \geq i_p(x,y) + i_p(y,z)$ and rearranging yields the Ptolemy inequality. 

Clearly, $\leq$ is a partial order if and only if $\leq_{i_p}$ is, and the two spaces have the same causal diamonds (wherever it makes sense). 
\end{proof}

\subsection{The Ptolemy inequality in Minkowski space}

In the next lemma we prove that Minkowski space, denoted by $\mathbb R^{1,n}$ and endowed with its hyperbolic norm $\| \cdot \|$, is Ptolemaic and we discuss the rigidity of the Ptolemy inequality \eqref{eq: Ptolemy}. The strategy of the proof is similar to the one in the Euclidean space, where the Ptolemy inequality boils down to the triangle inequality after applying a circular inversion out of a point.

\begin{lemma}[Minkowski space is Ptolemaic]
\label{lem: Minkowski is Ptolemy}
Let $n \geq 1$ and $x \leq y \leq z \leq w$ be four points in $\mathbb R^{1,n}$. 
Then 
\begin{equation} 
\label{eq: Ptolemy in Minkowski}
\| z-x \| \, \| w-y \| \geq \| y-x \| \, \| w-z \| + \| w-x \| \, \|z-y\| \, .
\end{equation}
Moreover, equality holds in \eqref{eq: Ptolemy in Minkowski} if and only if at least two points coincide or $x,y,z,w$ lie on a (possibly degenerate) rectangular hyperbola or a timelike line inside a timelike \emph{2}-plane in $\mathbb R^{1,n}$.
\end{lemma}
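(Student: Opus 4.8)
The plan is to imitate the classical Euclidean argument, where the Ptolemy inequality becomes the (reverse) triangle inequality after inverting about one of the four points. By Remark~\ref{rem: Ptolemy elem prop}(iii) we may assume $y \ll z$, and if two of the four points coincide the statement is immediate from Remark~\ref{rem: Ptolemy elem prop}(iv), so assume the points are distinct. After a translation we take $x$ to be the origin and fix $r=1$; write $H := H_{x,1}$, so that on $I^+(x)$ one has $H(v)=v/\|v\|^{2}$ (the maximising geodesics issuing from the origin are the straight rays). Two elementary facts are needed. First, $\langle H(q)-H(p),\,H(q)-H(p)\rangle = \langle q-p,\,q-p\rangle/(\|p\|^{2}\|q\|^{2})$ for all $p,q \in I^+(x)$, whence
\[
\ell(H(q),H(p)) = \frac{\ell(p,q)}{\ell(x,p)\,\ell(x,q)} \qquad \text{whenever } p \le q \text{ in } I^+(x).
\]
Second, $H$ reverses the causal order, i.e.\ $p \le q$ implies $H(q) \le H(p)$; for $p \ll q$ this follows from Lemma~\ref{lem: hyperbolic inversion minkowski space} applied with centre $x$ and radius $\ell(x,p)$, for which $p$ is a fixed point of the inversion, and the case $\ell(p,q)=0$ is then obtained by continuity (alternatively, both facts are one-line computations).

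Granting this, suppose first in addition that $x \ll y$, so that $y,z,w \in I^+(x)$. Then the chain $y \le z \le w$ is sent by $H$ to the reversed chain $H(w) \le H(z) \le H(y)$, and the reverse triangle inequality applied to the latter reads, after substituting the displayed identity,
\begin{align*}
\frac{\ell(y,w)}{\ell(x,y)\,\ell(x,w)} = \ell(H(w),H(y)) &\ge \ell(H(w),H(z)) + \ell(H(z),H(y)) \\
&= \frac{\ell(z,w)}{\ell(x,z)\,\ell(x,w)} + \frac{\ell(y,z)}{\ell(x,y)\,\ell(x,z)}.
\end{align*}
Multiplying through by $\ell(x,y)\,\ell(x,z)\,\ell(x,w)$ and using $\ell(a,b)=\|b-a\|$ for causally related points (and $\ell(x,v)=\|v-x\|$) gives exactly \eqref{eq: Ptolemy in Minkowski}. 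The remaining case $\ell(x,y)=0$ is handled by perturbation: replace $x$ by $x_\varepsilon := x - \varepsilon e$ with $e$ a fixed future unit timelike vector; then $x_\varepsilon \ll x \le y$ gives $x_\varepsilon \ll y$, the inequality holds for $(x_\varepsilon,y,z,w)$, and letting $\varepsilon \to 0^{+}$ and using continuity of $\ell$ on $\mathbb{R}^{1,n}$ yields \eqref{eq: Ptolemy in Minkowski} in general.

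For the rigidity statement, assume the four points are distinct and consider first the case $\ell(x,y),\ell(z,w)>0$. Then equality in \eqref{eq: Ptolemy in Minkowski} is equivalent to equality in the reverse triangle inequality for $H(w)\le H(z)\le H(y)$; since $\ell(y,w)\ge \ell(y,z)>0$ we have $H(w)\ll H(y)$, and in $\mathbb{R}^{1,n}$ equality forces $H(z)$ to lie on the straight timelike segment $[H(w),H(y)]$, i.e.\ $H(y),H(z),H(w)$ are collinear. Applying the involution $H$ once more, $y,z,w$ lie on the $H$-image of the line $L'$ through $H(y),H(z),H(w)$. If $L'$ passes through $x$, then $y,z,w$ and $x$ all lie on $L'$, which is a timelike line (it contains $y\in I^+(x)$); equality then holds by Remark~\ref{rem: Ptolemy elem prop}(iv). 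If $L'$ misses $x$, a direct computation inside the $2$-plane $\Pi:=\mathrm{span}(z-x,w-x)$ — which is a timelike plane, since $z-x$ and $w-x$ are timelike (because $x\ll z$ and $x\ll w$) and linearly independent, unless $x,z,w$ are collinear and we are back to the previous case — shows that the $H$-image of $L'$ is a rectangular hyperbola through $x$ lying in $\Pi$. Hence $x,y,z,w$ lie on a rectangular hyperbola in a timelike $2$-plane. The converse in these cases follows by running the argument backwards: $H$ sends a rectangular hyperbola through $x$ to a line, and three collinear, causally ordered points realise equality in the reverse triangle inequality.

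The main obstacle is the rigidity statement, and within it the sub-cases $\ell(x,y)=0$ or $\ell(z,w)=0$, where the inversion at $x$ (resp.\ at $w$) cannot be applied directly and the perturbation used for the inequality destroys equality. These must be treated separately — by inverting at the opposite endpoint when it is timelike related (using the symmetry of \eqref{eq: Ptolemy in Minkowski} under the reversal $(x,y,z,w)\mapsto(w,z,y,x)$), and by a direct computation when neither endpoint is available — and they turn out to correspond exactly to the degenerate rectangular hyperbolas, i.e.\ to pairs of null lines meeting at a common vertex; this is the reason for the clause ``possibly degenerate'' in the statement. A secondary, purely computational point is the claim, used above, that $H_{x,1}$ interchanges timelike lines not through $x$ with rectangular hyperbolas through $x$ inside timelike $2$-planes, which one checks by restricting to the plane in question and identifying it with $\mathbb{R}^{1,1}$.
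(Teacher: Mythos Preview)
Your proof of the inequality is correct and is essentially the paper's own argument: both reduce \eqref{eq: Ptolemy in Minkowski} to the reverse triangle inequality after a hyperbolic inversion at $x$. The paper writes out the same identity $\|H(q)-H(p)\|=\|q-p\|/(\|p\|\,\|q\|)$ in a slightly disguised algebraic form (via the expressions $\tfrac{\|w\|}{\|y\|}y$, etc.), and also reaches the general case by a limiting argument, exactly as you do.

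For the rigidity, your route diverges from the paper's. The paper first proves that the four points are coplanar by observing that the Cayley--Menger determinant of the configuration factors and one factor is precisely the Ptolemy defect; it then works in $\mathbb{R}^{1,1}$ and parametrises the equality condition directly to obtain the hyperbola. You instead use that $H$ preserves rays through the origin, so collinearity of $H(y),H(z),H(w)$ already forces $y,z,w$ (and $x=0$) into a single timelike plane, and then appeal to the fact that $H$ exchanges timelike lines off the origin with rectangular hyperbolas through the origin. Both arguments are valid; yours is more geometric and avoids the Cayley--Menger computation, while the paper's determinant factorisation gives coplanarity in one stroke without needing the line--hyperbola correspondence. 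Your observation that the case $\ell(x,y)=0<\ell(z,w)$ can be handled by the symmetry $(x,y,z,w)\mapsto(w,z,y,x)$ is a nice shortcut the paper does not use.

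The one genuine gap is that the remaining null sub-cases are only sketched: you assert that when neither inversion is available the configuration must lie on a degenerate hyperbola, but you do not carry this out. The paper disposes of these cases up front by a short direct analysis (three points on a null ray give trivial equality; all relations null but no three collinear can never give equality by push-up), and you should do the same to make the rigidity proof complete.
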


\begin{proof}
We start with the proof of \eqref{eq: Ptolemy in Minkowski}. Without loss of generality we can assume that $x=0 \ll y \ll z \ll w$, indeed the general case is obtained by composing with a translation and a limiting procedure. 
We need to prove that
\[
\| z \| \, \| w- y \| \geq \| y \| \, \| w-z \| + \| w \| \,  \| z-y \| \, .
\]
Observe that since $y \ll w$, we can consider the hyperbolic inversion of $y$ centred at $x=0$ with radius $r= \|w\|$, obtaining 
\[
H_{x,r} (y) = \frac{\|w\|^2}{\|y\|^2} y \, ,
\]
and so by Lemma \ref{lem: hyperbolic inversion minkowski space} we have that $H_{x,r} (y) \geq w$, in particular, $\frac{\| w \|}{\|y\|} y \geq \frac{\|y\|}{\|w\|} w$. 
Now notice that
\[
\| w -y \| = \left \|   \frac{\| w \|}{\|y\|} y - \frac{\|y\|}{\|w\|} w \right \| \, .
\]
Hence the Ptolemy inequality follows as an application of the reverse triangle inequality:
\begin{align*}
\|z\| \, \| w-y \| & = \| z \|  \left \|   \frac{\| w \|}{\|y\|} y - \frac{\|y\|}{\|w\|} w \right \| \\
& = \left \| \frac{\|w \| \| z\|}{\|y \| } y - \frac{\|y\| \|w\|}{\|z\|} z + \frac{\|y\| \|w\|}{\|z\|} z - \frac{\|y\| \| z\|}{\|w\|} w \right \| \\
& \geq \left \| \frac{\|w \| \| z\|}{\|y \| } y - \frac{\|y\| \|w\|}{\|z\|} z \right \| + \left \| \frac{\|y\| \|w\|}{\|z\|} z - \frac{\|y\| \| z\|}{\|w\|} w \right \| \\
& =\| w\| \left \| \frac{\| z\|}{\|y \| } y - \frac{\|y\|}{\|z\|} z \right \| + \| y\|  \left \| \frac{\| w\|}{\|z \| } z - \frac{\|z\|}{\|w\|} w \right \| \\
& = \| w \| \,  \| z-y \| + \| y \| \, \| w-z \| \, .
\end{align*} 

We are left to discuss the equality case. 
If at least two points coincide, the Ptolemy inequality is an equality, hence we suppose that all the points are distinct. 
If the first three points (or the last three) are aligned along a null ray the equality is trivially verified (notice this case can be viewed as a degenerate rectangular hyperbola). Moreover if all relations are null, but no three points are aligned on a null ray, then one can easily verify (via the push-up property) that equality can never hold in \eqref{eq: Ptolemy in Minkowski}. In particular, we can assume that at least one relation is chronological.

We claim that the four points must lie on a timelike 2-plane inside $\mathbb R^{1,n}$. 
Since we have only four points we can restrict our attention to $\mathbb R^{1,2}$. 
To conclude that they lie on the same timelike 2-plane, it will be sufficient to check that the volume of the convex hull $\Delta$ spanned by the points $x,y,z$ and $w$ is zero. 
Using the Cayley--Menger determinant, cf.\ \cite[Section 9.7.3]{Berger}, we have
\[
\text{Vol}  (\Delta)^2 = - \frac{1}{288} \, \text{det} \begin{pmatrix}
0 & 1 & 1 & 1 & 1 \\
1 & 0 & \| y-x\|^2 & \|z-x\|^2 & \|w- x\|^2 \\
1 & \|y-x\|^2 & 0 & \|z-y\|^2 & \| w-y\|^2 \\
1 & \|z-x\|^2 & \|z-y\|^2 & 0 & \|w-z\|^2 \\
1 & \|w-x\|^2 & \| w-y\|^2 & \|w-z\|^2 & 0 \\ 
\end{pmatrix} \, .
\]
The determinant on the right-hand side, seen as a polynomial in the respective hyperbolic norms, can be factorized into a product of four terms, cf.\ \cite{Berger}, and one of them is precisely $\| z-x \| \, \| w-y \|- \| y-x \| \, \| w-z \| - \| w-x \| \, \|z-y\|$. 
Hence if \eqref{eq: Ptolemy in Minkowski} is an equality then $\text{Vol}(\Delta)=0$ as claimed. 

From now on we consider the four points in $\mathbb R^{1,1}$ and without loss of generality we take $x=0$. 
Notice that, along the lines of the proof, equality holds if and only if the points
\[
\frac{\|w\| \|z\|}{\|y\|} y \geq \frac{\|y\| \|w\|}{\|z\|}z \geq \frac{\|y\| \|z\|}{\|w\|} w\, ,
\]
are causally related and aligned, i.e.\ they lie on a geodesic segment. Then, necessarily there exists some $\lambda \in [0,1]$ such that
\begin{equation} \label{eq: Hyperbola}
    \lambda \, \frac{\|z\|^2}{\|y\|^2} \, ( \|w\| ^2 y - \|y\|^2 w) = \|w\|^2 z - \|z\|^2 w \, .
\end{equation}
If $x,y,w$ are aligned, then it is clear that $z$ must be aligned as well. 
Suppose that $x,y,w$ are not aligned.
Now notice that, if we keep $y$ and $w$ fixed, solving the equation \eqref{eq: Hyperbola} by $z= z_{\lambda}$ for $\lambda \in [0,1]$ gives a parametrization of a portion of a (possibly degenerate) rectangular hyperbola passing through $x=0$, $y$ and $w$.
\end{proof}

\begin{remark}[Edge cases of hyperbolas]
Arguing in a similar way to Lemma \ref{lem: Minkowski is Ptolemy}, one can show that any vector space with a Lorentzian scalar product is Ptolemaic. 

In the Minkowski plane with metric $d t^2 - d x^2$, a timelike rectangular hyperbola is obtained by selecting one of the two connected components of the set given by the equation $t^2-x^2 = -a$ for some $a \in (0, +\infty)$. Moreover, notice that a degenerate hyperbola or a timelike line can be achieved by LGH-convergence of rectangular hyperbolas as $a \rightarrow 0$ and $a \rightarrow +\infty$, respectively.
\hfill$\blacksquare$ 
\end{remark}

\section{Ptolemaic spacetimes}

Here we present some results about Ptolemaic spacetimes. 
In the first subsection, we aim to establish a link between the Ptolemy inequality and the non-positive timelike sectional curvature. 
We then go on to show that Ptolemaic spaces are future one-connected (using the uniqueness of geodesics) to globalise the curvature bound.

\subsection{Ptolemaic and non-positive curvature}

As in positive signature, it can be easily seen that spaces of non-positive curvature are Ptolemaic.

\begin{proposition}[Non-positive curvature implies Ptolemaic]
\label{prop: npc implies ptolemy}
Let $X$ be a globally non-positively curved \LpLSn. 
Then $X$ is Ptolemaic.
\end{proposition}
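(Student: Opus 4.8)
The plan is to reduce the Ptolemy inequality \eqref{eq: Ptolemy} on $X$ to the already-established Ptolemy inequality in the Minkowski plane (Lemma \ref{lem: Minkowski is Ptolemy}) by passing to a comparison configuration. By Remark \ref{rem: Ptolemy elem prop}(iii), it suffices to treat a quadruple $x \leq y \ll z \leq w$. The first step is to localize: since $X$ is globally non-positively curved, by the results recalled after Definition \ref{def: triangle comparison} the space $X$ is itself a $(\leq 0)$-comparison neighbourhood, it is uniquely geodesic (by \cite[Theorem 4.7]{BNR25}), and all four points are causally related, so we have maximising geodesics joining each consecutive pair and in particular the ``large'' triangle $\Delta(x,z,w)$ (with $z$ on no side a priori) is a genuine timelike triangle whenever $x \ll w$ — the degenerate cases where some of the relevant relations are null or some points coincide are handled directly as in Remark \ref{rem: Ptolemy elem prop}(iii)--(iv) and the null-alignment discussion in the proof of Lemma \ref{lem: Minkowski is Ptolemy}.

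The core step is the comparison-triangle bookkeeping. I would consider the timelike triangle $\Delta(x, z, w)$ in $X$ together with a comparison triangle $\Delta(\bx, \bz, \bw)$ in $\lm{0} = \mathbb R^{1,1}$, so that the three ``outer'' sides are preserved: $\|\bz - \bx\| = \ell(x,z)$, $\|\bw - \bz\| = \ell(z,w)$, $\|\bw - \bx\| = \ell(x,w)$. The remaining point $y$ lies on the side $[x,z]$ of this triangle (here I use $x \leq y \leq z$), so it has a comparison point $\by \in [\bx, \bz]$ with $\|\by - \bx\| = \ell(x,y)$ and $\|\bz - \by\| = \ell(y,z)$ automatically, since comparison points on a side split the side length. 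Triangle comparison \eqref{eq: timelike triangle comparison inequality} applied to the pair consisting of $\by$ (on side $[\bx,\bz]$) and $\bw$ (a vertex) then gives
\begin{equation*}
\ell(y, w) \geq \|\bw - \by\| \, .
\end{equation*}
Now apply Lemma \ref{lem: Minkowski is Ptolemy} to the four points $\bx, \by, \bz, \bw$ in $\mathbb R^{1,1}$ (they are causally ordered $\bx \leq \by \leq \bz \leq \bw$ by construction, using that within a comparison neighbourhood $\bar p \ll \bar q$ implies $p \ll q$ and conversely the orderings are respected): this yields
\begin{equation*}
\|\bz - \bx\| \, \|\bw - \by\| \geq \|\by - \bx\| \, \|\bw - \bz\| + \|\bw - \bx\| \, \|\bz - \by\| \, .
\end{equation*}
Substituting the identifications of the outer side lengths and the inequality $\|\bw - \by\| \leq \ell(y,w)$ on the \emph{left} is the wrong direction, so instead I use $\|\bw - \by\| \le \ell(y,w)$ to bound the right-hand side from below and rewrite the Minkowski inequality as $\ell(x,z)\,\|\bw-\by\| \ge \ell(x,y)\ell(z,w) + \ell(x,w)\ell(y,z)$, then combine with $\ell(x,z)\,\ell(y,w) \ge \ell(x,z)\,\|\bw-\by\|$ to conclude \eqref{eq: Ptolemy}.

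The main obstacle I anticipate is the sign/direction of the comparison inequality: triangle comparison bounds $\ell(p,q)$ from \emph{below} by its comparison value, and one must be careful that $\ell(x,z)$ appears multiplying the very factor $\ell(y,w)$ that we are comparing, so the two inequalities chain in the right order (they do, precisely because $\ell(x,z) = \|\bz-\bx\| > 0$ is preserved and appears as a positive multiplicative constant). A secondary technical point is ensuring $\Delta(x,z,w)$ is a bona fide timelike triangle so that Definition \ref{def: triangle comparison} applies: if $x \ll w$ fails, or if $\ell(x,z)$ or $\ell(z,w)$ vanishes, one falls back on Remark \ref{rem: Ptolemy elem prop}(iii) (reverse triangle inequality) exactly as in the Minkowski proof; and the case where $y = z$ or other coincidences occur is covered by Remark \ref{rem: Ptolemy elem prop}(iv). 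Finally, one should note the argument is purely local in the sense that it only uses that $X$ is \emph{globally} a comparison neighbourhood to guarantee the existence of the connecting geodesics and the comparison triangle for the possibly ``large'' triangle $\Delta(x,z,w)$; no further globalization is needed here.
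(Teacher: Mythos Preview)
Your argument contains a genuine gap at the step where you claim ``the remaining point $y$ lies on the side $[x,z]$ of this triangle (here I use $x \leq y \leq z$)''. The relation $x \leq y \leq z$ only says that $y$ lies in the causal diamond $J(x,z)$; it does \emph{not} say that $y$ lies on the maximising geodesic from $x$ to $z$. Hence there is in general no comparison point $\by \in [\bx,\bz]$ for $y$, and the identities $\|\by - \bx\| = \ell(x,y)$ and $\|\bz - \by\| = \ell(y,z)$ that you use both fail: if you place $\by$ on $[\bx,\bz]$ at distance $\ell(x,y)$ from $\bx$, then $\|\bz-\by\| = \ell(x,z) - \ell(x,y)$, which equals $\ell(y,z)$ only when $y$ actually sits on the geodesic $[x,z]$. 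With $\by$ misplaced, neither the Minkowski Ptolemy inequality for $\bx,\by,\bz,\bw$ nor the triangle comparison bound $\ell(y,w) \geq \|\bw-\by\|$ says anything about the original quadruple.

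The paper's fix is to build \emph{two} comparison triangles, for $\Delta(x,y,z)$ and $\Delta(x,z,w)$, glued along the common side $[\bx,\bz]$ and realised on opposite sides of it; now $\by$ is a genuine vertex carrying the correct side lengths $\ell(x,y)$ and $\ell(y,z)$. The price is that $\ell(\by,\bw)$ is no longer directly controlled by a single triangle comparison. One obtains $\ell(y,w) \geq \ell(\by,\bw)$ by intersecting the diagonals $[\bx,\bz]$ and $[\by,\bw]$ at a point $\bp$, lifting it to $p \in [x,z]$, and applying triangle comparison in each of the two triangles separately to get $\ell(y,p) \geq \ell(\by,\bp)$ and $\ell(p,w) \geq \ell(\bp,\bw)$; summing and using the reverse triangle inequality gives the needed bound. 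When the glued configuration is concave at $\bz$ (so the diagonals fail to meet), an Alexandrov-lemma straightening reduces to the previous case.
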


\begin{proof}
Without loss of generality, we only consider the case $x \leq y \ll z \leq w$ in $X$.
Consider comparison triangles\footnote{The triangles in question may not exactly fit into the framework of Definition \ref{def: triangle comparison}. However, $y \ll z$ forces both to be admissible causal triangles, and these are precisely the triangles used for so-called causal curvature bounds, cf.\ \cite[Theorem 4.2]{BKR24}. } for $\Delta(x,y,z)$ and $\Delta(x,z,w)$, respectively, arranged in such a way that they share the segment $[\bx,\bz]$ and are realised on opposite sides of this segment. 

We first assume that there exists a point of intersection between $[\bx,\bz]$ and $[\by,\bw]$, call it $\bp$.  
Let $p \in [x,z]$ such that $\bp$ is its comparison point.  By the non-positive curvature of $X$, we know that $\ell(y,p) \geq \ell(\by,\bp)$ and $\ell(p,w) \geq \ell(\bp,\bw)$. 
Then 
\[
\ell(\by,\bw) = \ell(\by,\bp) + \ell(\bp,\bw) \leq \ell(y,p) + \ell(p,w) \leq \ell(y,w) \, .
\]
By Lemma \ref{lem: Minkowski is Ptolemy}, we know that \eqref{eq: Ptolemy} is satisfied for $\bx \leq \by \leq \bz \leq \bw$. 
All involved other distances in the model spaces are equal to the corresponding distances in $X$. 

Suppose now that $[\bx,\bz] \cap [\by,\bw] = \emptyset$, i.e., $\bx \leq \by \ll \bz \leq \bw$ form a causal quadrilateral in $\lm{K}$ that is concave at $\bz$.  
Straightening out the quadrilateral in the spirit of Alexandrov's Lemma (cf. \cite[Lemma 3.4]{BR25+}) causes $\ell(\bx,\bz)$ and $\ell(\by,\bw)$ to decrease and keeps all other relevant distances the same. 
Hence we find ourselves in the first case (with the point of intersection being $\bz$) and the inequalities chain in the right direction. 
More precisely, keep $\bz$ fixed and denote by $\bx', \by'$ and $\bw'$ the moved points such that $\ell(\bx,\by)=\ell(\bx',\by'), \ell(\bx,\bw)=\ell(\bx',\bw')$ and $\ell(\by',\bw')=\ell(\by',\bz)+\ell(\bz,\bw')=\ell(\by,\bz) + \ell(\bz,\bw)$. 
Then $\ell(x,z)=\ell(\bx,\bz) \geq \ell(\bx',\bz)$ and $\ell(y,w) \geq \ell(y,z)+\ell(z,w) = \ell(\by',\bw')$ by Alexandrov's Lemma. 
Hence
\begin{equation}
\label{eq: npc implies ptolemy}
\ell(x,z) \, \ell(y,w) \geq \ell(\bx',\bz) \, \ell(\by',\bw') \geq \ell(\bx',\by') \, \ell(\bz,\bw') + \ell(\bx',\bw') \, \ell(\by',\bz) \, ,
\end{equation}
where the second inequality is due to Lemma \ref{lem: Minkowski is Ptolemy}. 
All distances on the right-hand side of \eqref{eq: npc implies ptolemy} are equal to the corresponding distances in $X$, and the claim follows. 
\end{proof}

Next, we are going to prove an analogous statement to \cite[Lemma 4.3]{BuFaWra}. In fact, we will prove a stronger statement more in the spirit of \cite[Lemma II.1.13]{BH99}.

\begin{proposition}[Strict hinge comparison in the model spaces]
\label{prop: Strict hinge comparison in the model spaces}
Let $K < K'$ and let $(x,p,y)$ be a timelike hinge\footnote{By a hinge $(x,p,y)$ we mean two maximising geodesics $[p,x], [p,y]$ emanating from $p$ and their inscribed angle.} in $\lm{K}$ with $p \ll x \ll y$, $\ell(p,y) < D_{K'}$ and $\alpha =\ma_p(x,y) > 0$. 
Let $(\bx,\bp,\by)$ be a comparison hinge in $\lm{K'}$. 
Then $\ell(\bx,\by) > \ell(x,y)$.
\end{proposition}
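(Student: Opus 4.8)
The plan is to reduce the Lorentzian hinge comparison to its Riemannian counterpart (\cite[Lemma II.1.13]{BH99}), which asserts strict monotonicity of the third side of a hinge as a function of the curvature, by passing to the model spaces' explicit descriptions. Concretely, I would first recall that the model spaces $\lm{K}$ for $K<0$, $K=0$, $K>0$ are, respectively, (a timelike 2-plane in) De Sitter-type space, Minkowski space $\mathbb R^{1,1}$, and anti-De Sitter-type space; in all cases the timelike part admits a warped-product description $\mathrm d s^2 = \mathrm d t^2 - f_K(t)^2\,\mathrm d x^2$ in suitable coordinates, and the law of cosines \eqref{eq: LOC} (together with its $K\neq 0$ analogues from \cite[Lemma 2.4]{BS23}) relates the three side lengths $a=\ell(p,x)$, $c=\ell(p,y)$, $b=\ell(x,y)$ and the hinge angle $\alpha$ by an explicit formula $\cosh b = \Phi_K(a,c,\alpha)$. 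So the whole statement becomes: with $a$, $c$, $\alpha$ held fixed, $\Phi_K(a,c,\alpha)$ is strictly increasing in $K$.

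The key steps, in order: (1) fix the hinge data $a=\ell(p,x)>0$, $c=\ell(p,y)>0$, $\alpha>0$, noting that $a<c$ since $p\ll x\ll y$, and that $c<D_{K'}$ guarantees we stay within the regime where the $K'$-law of cosines and the existence of $\bx,\by$ are valid. (2) Write down the law of cosines in $\lm{K}$ and in $\lm{K'}$; for $K=0$ this is \eqref{eq: LOC} in the form $b^2 = a^2 + c^2 + 2ac\cosh\alpha$ (using that $p\ll x\ll y$ forces the sign $\sigma$ so that the $\cosh$ term adds), and for $K\neq 0$ the corresponding hyperbolic/trigonometric identity. (3) Differentiate the resulting closed-form expression $b=b_K$ in $K$, or — cleaner and avoiding case distinctions at $K=0$ — treat the Jacobi-field/variational characterisation: the third side as a function of $K$ satisfies an ODE whose comparison (a Sturm-type argument applied to the relevant Jacobi equation $J'' + K J = 0$ governing geodesic spreading in the model) yields strict monotonicity as soon as $\alpha>0$ and the geodesics have positive length. (4) Conclude $\ell(\bx,\by) = b_{K'} > b_K = \ell(x,y)$.

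The main obstacle is handling the three curvature regimes uniformly and making the strictness airtight: one must check that $\alpha>0$ and $a,c>0$ really do force a strict (not just non-strict) inequality at every step, and that the condition $\ell(p,y)<D_{K'}$ is exactly what is needed to keep the comparison hinge well-defined and the relevant trigonometric functions on the correct monotone branch (in particular avoiding the antipodal/cut-locus pathologies when $K'>0$). I would isolate this as the crux and, if a unified formula proves unwieldy, simply split into the cases $K<K'\le 0$, $K<0<K'$ with $K'$ arbitrary, and $0\le K<K'$, in each invoking the appropriate model-space law of cosines from \cite[Lemma 2.4]{BS23} and differentiating; the Riemannian analogue in \cite[Lemma II.1.13]{BH99} serves as the template for the monotonicity computation, with the cosine replaced by $\cosh$ throughout because the hinge is timelike.
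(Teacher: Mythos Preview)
Your approach is valid but takes a genuinely different route from the paper's. The paper does not manipulate the law of cosines or differentiate a closed-form $b_K$ in $K$. Instead, it writes both model metrics in polar coordinates based at $p$ (resp.\ $\bp$) as $g_K=\mathrm d r^2 - f(r,K)^2\,\mathrm d\theta^2$, notes that the warping factor $f$ (the solution of $f''+Kf=0$ with $f(0)=0$, $f'(0)=1$) is strictly decreasing in $K$, and considers the ``identity in polar coordinates'' map $\varphi: I^+_{\lm{K}}(p)\to I^+_{\lm{K'}}(\bp)$. Being a radial isometry, $\varphi$ carries $x,y$ to $\bx,\by$; because the angular coefficient shrinks, $\varphi$ strictly increases the $g$-square of any vector with nonzero $\partial_\theta$-component. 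Pushing the geodesic $\gamma=[x,y]$ through $\varphi$ therefore gives a timelike curve from $\bx$ to $\by$ of Lorentzian length strictly greater than $L_K(\gamma)=\ell(x,y)$ (strictly, since $\alpha>0$ forces $\gamma$ off the radial direction), whence $\ell(\bx,\by)>\ell(x,y)$. This argument is uniform across all three curvature regimes, needs no case split at $K=0$, and makes both the role of $\ell(p,y)<D_{K'}$ (domain of $\varphi$) and the strictness (from $\alpha>0$) immediate. Your law-of-cosines differentiation would also succeed, but at the price of the very case distinctions you flag, and with some care about signs: for the past vertex $p$ with $p\ll x\ll y$ the flat law of cosines reads $b^2=a^2+c^2-2ac\cosh\alpha$, not $+2ac\cosh\alpha$ (otherwise the reverse triangle inequality $b\le c-a$ would fail as $\alpha\to 0$). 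Your alternative Sturm/Jacobi suggestion is in fact much closer to what the paper does: the monotonicity of $f(\cdot,K)$ in $K$ is exactly Sturm comparison for $f''+Kf=0$, and the paper's radial map is the clean way to turn that into a statement about $\ell(\bx,\by)$.
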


\begin{proof}
To improve readability, in this proof we emphasise the space in which the timelike future of a point is considered. 
Note that by standard hinge comparison \cite[Definition 3.14 \& Theorem 5.1]{BKR24}, the non-strict version of this inequality is automatically true.
We are going to introduce polar coordinates $(r,\theta)$ centred at $p$ and $\bp$. 
By elementary calculations, we have that the metric on the (timelike) future of a point $p \in \lm{K}$, in unified notation, has the form 
\begin{equation*}
g_K=dr^2 - f(r,K)^2d\theta^2 \, ,
\end{equation*}
where $f$ has domain $\{ (r,K) \mid K \in \mathbb{R}, r \in (0,D_K) \}$ and is defined by
\begin{equation*}
f(r,K):=
\begin{cases}
\frac{1}{\sqrt{-K}}\sinh(\sqrt{-K}r), & \text{ if } K<0 \, , \\
r, & \text{ if } K=0 \, , \\
\frac{1}{\sqrt{K}}\sin(\sqrt{K}r), & \text{ if } K>0 \, .
\end{cases}
\end{equation*} 
Using Taylor expansion, one can show that both $\frac{1}{\sqrt{-K}}\sinh(\sqrt{-K}r)$ and $\frac{1}{\sqrt{K}}\sin(\sqrt{K}r)$ converge to $r$ as $K \to 0$, i.e. $f$ is continuous in $K$. 
Moreover, $f$ is strictly monotonically decreasing in $K$. 

We define the map 
\begin{equation*}
\varphi: \{q \in I^+_{\lm{K}}(p) : \ell(p,q) < D_{K'}\} \cup \{p\}  \to I^+_{\lm{K'}}(\bp) \cup \{\bp\}
\end{equation*}
by $\varphi(p)=\bp$ and by sending a point $q$ with coordinates $(r,\theta)$ in $I^+_{\lm{K}}(p)$ with $\ell(p,q) < D_{K}$ to the point with the same coordinates in $I^+_{\lm{K'}}(\bp)$. 
Notice that, by construction, $\Delta(\bp,\bx,\by)$ is contained in the image of $\varphi$. 
As $\varphi$ is a radial isometry, we obtain $\varphi(x)=\bx$ and $\varphi(y)=\by$. 

Let $z= (r, \theta) \in I^+_{\lm{K}}(p)$ and let $v \in T_z\lm{K}$. We write
\[
v=v_1 \, \partial_r |_{z} + v_2 \, \partial_{\theta} |_z \, ,
\]
and so $g_K(v,v) = v_1^2 - f(r,K)^2v_2^2$. 
We also compute $g_{K'}(d_z \varphi [v], d_z \varphi [v]) = v_1^2 - f(r,K')^2v_2^2$, where $d_z \varphi$ is the differential of $\varphi$ at $z$. 
Clearly, by the monotonicity of $f$, we have 
\begin{equation}
\label{eq: norm increasment of varphi}
g_K(v,v) \geq g_{K'}(d_z \varphi [v], d_z \varphi [v])
\end{equation}
with equality if and only if $v_2 =0$, i.e., if and only if $v$ is proportional to $\partial_r$.

Let $\gamma :[0,1] \rightarrow \lm{K}$ be the (timelike) geodesic segment connecting $x$ and $y$. Then $\varphi \circ \gamma : [0,1] \rightarrow \lm{K'}$ is a smooth curve connecting $\bx$ and $\by$, and it is timelike because of \eqref{eq: norm increasment of varphi}. In particular, the curve $\varphi \circ \gamma$ lies inside $I^+_{\lm{K'}}(\bp)$. 
Thus, we get that 
\begin{equation} \label{eq: strict chain}
\ell(x,y) = \int_0^1 \sqrt{g_K(\dot \gamma_t, \dot \gamma_t ) } \d t \leq \int_0^1 \sqrt{g_{K'}( d_{\gamma_t}\varphi [\dot \gamma_t], d_{\gamma_t}\varphi[\dot \gamma_t] ) } \d t \leq \ell(\bx,\by) \, .
\end{equation}
Indeed, the last inequality follows by the definition of the time separation function as a supremum.
The first inequality in \eqref{eq: strict chain} is an equality, as we already mentioned, if and only if $\dot \gamma_t$ is proportional to $\partial_r$ but this is clearly the case if and only if the triangle $\Delta(p,x,y)$ would be degenerate, that is to say $\alpha=0$. 
\end{proof}

The next corollary is similar to \cite[Corollary 4.4]{BuFaWra}. 

\begin{corollary}[Strict hinge comparison in positively curved spacetimes]
\label{cor: Strict hinge comparison in positively curved spacetime}
Let $(M,g)$ be a strongly causal spacetime with $\TSec > 0$. 
Let $(x,p,y)$ be a timelike hinge in $M$ with $x \ll y$ and $\alpha=\ma_p(x,y) > 0$. 
Let $(\bx,\bp,\by)$ be a comparison hinge in the Minkowski plane. 
Then $\ell(x,y) > \ell(\bx,\by)$. 
\end{corollary}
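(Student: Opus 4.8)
The plan is to follow the strategy of \cite[Corollary 4.4]{BuFaWra}: localise the strict curvature condition $\TSec>0$ to a genuine positive constant $K'$ near the hinge, apply the (non-strict) hinge comparison available for the lower bound $\TSec\ge K'$, and then use Proposition \ref{prop: Strict hinge comparison in the model spaces} with $K=0<K'$ to upgrade the resulting inequality to a strict one. Throughout we may assume $p\ll x$ and $p\ll y$, since these are built into the hypothesis that $(x,p,y)$ is a timelike hinge (the inscribed hyperbolic angle $\alpha=\ma_p(x,y)$ requires the legs $[p,x],[p,y]$ to be timelike).

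First I would dispose of a degenerate case: if the Minkowski comparison hinge $(\bx,\bp,\by)$ happens to satisfy $\bx\not\ll\by$, then $\ell(\bx,\by)=0<\ell(x,y)$ because $x\ll y$, and the statement is trivial; so from now on assume $\bx\ll\by$, so that $\bp\ll\bx\ll\by$. Since $p\ll x\ll y$, the legs $[p,x],[p,y]$ and the maximising geodesic $[x,y]$ are timelike, and their union is a compact subset of $M$. Passing, via strong causality, to a globally hyperbolic neighbourhood and using that the sectional-curvature function is continuous and strictly positive, I would choose $K'>0$ small enough that $\TSec\ge K'$ holds on a neighbourhood $U$ of this compact set with $U$ a $(\ge K')$-comparison neighbourhood --- invoking the equivalence of the sectional and synthetic timelike curvature bounds for strongly causal spacetimes (\cite{Harris}, \cite[Theorems 3.1 \& 3.2]{BKOR25}) --- and moreover small enough that $\ell(p,y)<D_{K'}=\pi/\sqrt{K'}$. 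The hard part is exactly this extraction of a \emph{uniform} positive lower bound: pointwise $\TSec>0$ does not by itself bound $\TSec$ away from $0$ on a neighbourhood, since the sectional curvatures of almost-null timelike planes are not controlled by the hypothesis. The resolution is that along the \emph{uniformly} timelike geodesics of the hinge, the relevant planes --- those spanned by the geodesic tangent and an orthogonal spacelike direction --- vary in a compact family, on which $K$ is continuous and strictly positive; this is what one must make precise.

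Finally I would assemble the two comparisons. Applying the standard hinge comparison for the lower bound $\TSec\ge K'$ (\cite[Definition 3.14 \& Theorem 5.1]{BKR24}) to $(x,p,y)$ gives $\ell(x,y)\ge\ell(\hat x,\hat y)$, where $(\hat x,\hat p,\hat y)$ is the comparison hinge in $\lm{K'}$ with legs $\ell(p,x),\ell(p,y)$ and angle $\alpha$. On the other hand, the Minkowski comparison hinge $(\bx,\bp,\by)$ has $\bp\ll\bx\ll\by$, angle $\alpha>0$ and $\ell(\bp,\by)=\ell(p,y)<D_{K'}$, and its comparison hinge in $\lm{K'}$ is again $(\hat x,\hat p,\hat y)$ (a comparison hinge depends only on the two leg lengths and the included angle), so Proposition \ref{prop: Strict hinge comparison in the model spaces} applied with $K=0<K'$ yields $\ell(\hat x,\hat y)>\ell(\bx,\by)$. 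Chaining the two inequalities gives $\ell(x,y)\ge\ell(\hat x,\hat y)>\ell(\bx,\by)$, which is the claim. Apart from the localisation step flagged above, this is a direct combination of Proposition \ref{prop: Strict hinge comparison in the model spaces} with the comparison machinery already recalled.
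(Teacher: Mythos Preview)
Your proposal is correct and follows essentially the same route as the paper: localise $\TSec>0$ to $\TSec\ge c>0$ on a compact region containing the hinge, apply (non-strict) hinge comparison against $\lm{c}$, then invoke Proposition~\ref{prop: Strict hinge comparison in the model spaces} with $K=0<c$ to obtain the strict inequality against Minkowski. Your added treatment of the degenerate case $\bx\not\ll\by$ and your flagging of the uniform-lower-bound extraction are more careful than the paper's one-line ``we may assume $\TSec(M)|_U>c>0$'', but the argument is the same.
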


\begin{proof}
Since the hinge is (initially) contained in a compact region $U$ of $M$, we may assume that $\TSec(M)|_U > c > 0$. 
Let $(\bx',\bp',\by')$ be a comparison hinge in $\lm{c}$ and let $(\bx,\bp,\by)$ be a comparison hinge in the Minkowski plane. 
Then by hinge comparison and Proposition \ref{prop: Strict hinge comparison in the model spaces}, we get $\ell(x,y) \geq \ell(\bx',\by') > \ell(\bx,\by)$. 
\end{proof}

In the following theorem we prove that a strongly causal Ptolemaic spacetime has non-positive timelike sectional curvature. Note that a key instance where the smoothness of $(M,g)$ is used is when we assume that the strict positivity of the curvature bound is continuous in Corollary \ref{cor: Strict hinge comparison in positively curved spacetime}.  

\begin{theorem}[Ptolemaic spacetimes are non-positively curved]
\label{thm: ptolemy implies NPC}
Let $(M,g)$ be a strongly causal Ptolemaic spacetime. 
Then $\TSec \leq 0$. 
\end{theorem}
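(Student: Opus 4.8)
The plan is to argue by contradiction: suppose $\TSec(M) > 0$ somewhere, i.e., there is a point $p \in M$ and a timelike plane $\Pi \subseteq T_pM$ with $K(\Pi) > 0$. By continuity of the curvature tensor, we can find a small globally hyperbolic (even convex) neighbourhood $U$ of $p$ on which $\TSec|_U > c > 0$ for some constant $c$, and such that $U$ itself is essentially a piece of a strongly causal spacetime with a uniform positive lower curvature bound. Inside $\Pi$ I would then produce a small timelike quadrilateral which violates the Ptolemy inequality, using Corollary \ref{cor: Strict hinge comparison in positively curved spacetime} to get \emph{strict} comparison against the Minkowski model. The idea is that in Minkowski space a generic timelike quadrilateral inscribed in a rectangular hyperbola satisfies \eqref{eq: Ptolemy in Minkowski} with equality (by Lemma \ref{lem: Minkowski is Ptolemy}); pushing this configuration into the positively curved spacetime should strictly increase the two ``diagonal'' time separations relative to their Minkowski comparison values while we control the four ``side'' separations, tipping the Ptolemy inequality the wrong way.

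Concretely, I would work in $\Pi$ and choose four points $x \ll y \ll z \ll w$ (small, contained in $U$) arranged so that their Minkowski comparison configuration is a non-degenerate quadrilateral inscribed in a rectangular hyperbola — so that the Minkowski Ptolemy identity is an \emph{equality}. Following the structure of the proof of Proposition \ref{prop: npc implies ptolemy}, but now with the inequalities reversed, I would set up comparison triangles for $\Delta(x,y,z)$ and $\Delta(x,z,w)$ sharing the side $[\bx,\bz]$ on opposite sides, let $\bp = [\bx,\bz]\cap[\by,\bw]$ with $p \in [x,z]$ the corresponding point. The key point is that the hinges at $p$ in $\Delta(x,y,p)$ (resp.\ $\Delta(p,z,w)$) have strictly positive angle — this is where the non-degeneracy of the inscribed-hyperbola configuration is used — so Corollary \ref{cor: Strict hinge comparison in positively curved spacetime} gives $\ell(y,p) > \ell(\by,\bp)$ and $\ell(p,w) > \ell(\bp,\bw)$, hence $\ell(y,w) \geq \ell(y,p)+\ell(p,w) > \ell(\by,\bp)+\ell(\bp,\bw) = \ell(\by,\bw)$. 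Meanwhile $\ell(x,z) = \ell(\bx,\bz)$ and all four side separations agree with their Minkowski counterparts. Feeding this into the Minkowski equality $\ell(\bx,\bz)\ell(\by,\bw) = \ell(\bx,\by)\ell(\bz,\bw)+\ell(\bx,\bw)\ell(\by,\bz)$ yields
\[
\ell(x,z)\,\ell(y,w) > \ell(x,y)\,\ell(z,w) + \ell(x,w)\,\ell(y,z),
\]
contradicting that $M$ is Ptolemaic.

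The main obstacle is ensuring that the chosen quadrilateral genuinely fits in a single timelike $2$-plane (so that the Minkowski comparison is in $\mathbb{R}^{1,1}$ and Lemma \ref{lem: Minkowski is Ptolemy}'s rigidity applies cleanly) \emph{and} that both relevant hinge angles at $p$ are strictly positive, not accidentally zero. One needs the inscribed-hyperbola configuration in Minkowski to be genuinely non-degenerate (i.e., not four collinear points), and one must check that straightening / the intersection point $\bp$ is an interior point of both segments so that the two sub-hinges are non-trivial; a careful but routine choice of four points on a fixed rectangular hyperbola through the origin in $\mathbb{R}^{1,1}$ achieves this. A secondary technical point is that the comparison triangles $\Delta(x,y,z)$ and $\Delta(x,z,w)$ live in a spacetime with only a \emph{lower} curvature bound, so one should invoke hinge comparison for lower bounds (\cite[Theorem 5.1]{BKR24}, as in Corollary \ref{cor: Strict hinge comparison in positively curved spacetime}) rather than triangle comparison; since $y \ll z$ both triangles are admissible causal triangles and this is exactly the setting covered there.
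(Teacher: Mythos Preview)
Your argument has a fatal sign error: the inequality you end up with,
\[
\ell(x,z)\,\ell(y,w) > \ell(x,y)\,\ell(z,w) + \ell(x,w)\,\ell(y,z),
\]
is \emph{consistent} with the Ptolemy inequality \eqref{eq: Ptolemy}, not a violation of it. To contradict Ptolemy you must make the right-hand side strictly exceed the left. There is a second, related confusion in your use of Corollary~\ref{cor: Strict hinge comparison in positively curved spacetime}: that result compares a hinge in $M$ with a Minkowski hinge having the \emph{same angle and leg lengths}. Your point $\bp$, however, is a comparison point in a comparison \emph{triangle} (fixed by side lengths), so the angle at $\bp$ has no a priori relation to the angle at $p$ in $M$, and the corollary does not apply. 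If instead you invoke triangle comparison for the lower bound $\TSec\geq 0$, the inequality reverses to $\ell(y,p)\leq\ell(\by,\bp)$, and the reverse triangle inequality then blocks any useful chaining to $\ell(y,w)$. Finally, engineering four points in $M$ whose comparison configuration happens to lie on a rectangular hyperbola is an implicit constraint you never explain how to realise.

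The paper's proof runs the construction in the opposite direction, and this is what makes the signs come out right. It starts in the tangent plane $\Pi\cong\mathbb{R}^{1,1}$, picks $v_1,\dots,v_4$ on a rectangular hyperbola so that the two \emph{diagonals} $[v_1,v_3]$ and $[v_2,v_4]$ pass through the origin, and pushes forward via $\exp_p$ to $x_i\in M$. Since $\exp_p$ is a radial isometry, the diagonal lengths $\ell(x_1,x_3)$ and $\ell(x_2,x_4)$ equal their Minkowski values exactly, so the left-hand side of Ptolemy is fixed. Each \emph{side} length $\ell(x_i,x_j)$, by contrast, is governed by a genuine hinge at $p$ (legs $\|v_i\|,\|v_j\|$ and angle read off in $T_pM$), so Corollary~\ref{cor: Strict hinge comparison in positively curved spacetime} applies directly and yields strict increase for $\ell(x_1,x_2)$ and $\ell(x_3,x_4)$. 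Hence the right-hand side of Ptolemy strictly exceeds its Minkowski value, which by Lemma~\ref{lem: Minkowski is Ptolemy} equals the (unchanged) left-hand side --- the desired contradiction. The structural point you missed is that one must fix the diagonals and let the sides grow, not the reverse.
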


\begin{proof}
Suppose towards a contradiction that there exists a point $p \in M$ and a timelike 2-plane $\Pi \subseteq T_pM$ such that $K(\Pi) > 0 $. 
Consider the restricted exponential map $\exp_p|_{\Pi}:\Pi \to M$, and
by choosing a sufficiently small neighbourhood $U$ of $0 \in \Pi$, we obtain a diffeomorphism to a two-dimensional (Lorentzian) submanifold $N=\exp_p(U) \hookrightarrow M$. We denote by $\| \cdot \|$ the hyperbolic norm obtained by $g$ on $\Pi$.

By the continuity of the curvature tensor, we can assume $U$ to be small enough such that $K(\Sigma) > \frac{S}2$ for all $y \in N$, where $\Sigma = \Sigma_y$ is the one and only plane in $T_yM$ that is tangent to $N$. 
Denote by $\ell_N$ the time separation function on $N$. Further denote the original time separation in $M$ by $\ell_M$, then clearly $\ell_N \leq \ell_M$. 
Therein, consider a timelike rectangular hyperbola symmetric to the $x$-axis but not passing through $0$ in $\Pi$.
Furthermore, consider two distinct timelike lines through $0$. 
Label the four points of intersection of these two lines with the hyperbola $v_1, v_2, v_3, v_4$, ordered by their time coordinates. 
Then necessarily $[v_1,v_3]$ and $[v_2,v_4]$ form geodesic segments in the Minkowski plane $\Pi$ and the four vectors form four non-zero angles at 0, see Figure \ref{fig: two lines through hyperbola}. 

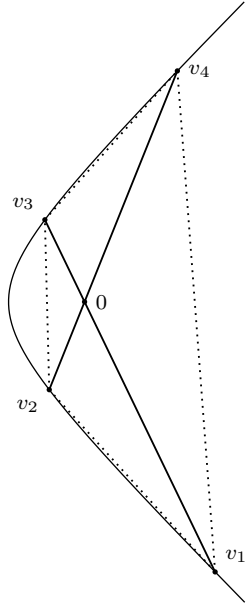
\begin{figure}
\begin{center}
\begin{tikzpicture}

\draw [samples=50,domain=-0.78:0.78,rotate around={0:(-2,0)},xshift=-2cm,yshift=0cm,line width=0.5pt] plot ({1*(1+(\x)^2)/(1-(\x)^2)},{1*2*(\x)/(1-(\x)^2)});

\draw [dotted, line width=0.7pt] (-0.4646137102691221,-1.165079850780002)-- (-0.5220617482171948,1.0882561628967764);
\draw [line width=0.7pt] (-0.5220617482171948,1.0882561628967764)-- (0,0);
\draw [line width=0.7pt] (0,0)-- (-0.4646137102691221,-1.165079850780002);
\draw [dotted, line width=0.7pt] (-0.4646137102691221,-1.165079850780002)-- (1.7177735992113656,-3.5807597706343035);
\draw [dotted, line width=0.7pt] (1.7177735992113656,-3.5807597706343035)-- (1.2210126948650655,3.0618495685585065);
\draw [line width=0.7pt] (1.2210126948650655,3.0618495685585065)-- (0,0);
\draw [line width=0.7pt] (0,0)-- (1.7177735992113656,-3.5807597706343035);
\draw [dotted, line width=0.7pt] (-0.5220617482171948,1.0882561628967764)-- (1.2210126948650655,3.0618495685585065);

\begin{scriptsize}
\coordinate [circle, fill=black, inner sep=0.7pt, label=0: {$0$}] (0) at (0,0);
\coordinate [circle, fill=black, inner sep=0.7pt, label=225: {$v_2$}] (ba) at (-0.4646137102691221,-1.165079850780002);
\coordinate [circle, fill=black, inner sep=0.7pt, label=0: {$v_4$}] (ba) at (1.2210126948650655,3.0618495685585065);
\coordinate [circle, fill=black, inner sep=0.7pt, label=135: {$v_3$}] (ba) at (-0.5220617482171948,1.0882561628967764);
\coordinate [circle, fill=black, inner sep=0.7pt, label=45: {$v_1$}] (ba) at (1.7177735992113656,-3.5807597706343035);
\end{scriptsize}

\end{tikzpicture}
\end{center}
\caption{Two lines pass through the origin and intersect the hyperbola in four points. 
}
\label{fig: two lines through hyperbola}
\end{figure} 

By construction, any hinge involving the origin and two of the $v_i$'s serves as a comparison hinge for the corresponding hinge at $p$. 
For $i=1,2,3,4$, set $x_i=\exp_p(v_i) \in N$.
Since $\exp_p$ is a radial isometry, we have $\ell_M(x_1,x_3)=\ell_N(x_1,x_3)=\|v_3 - v_1\|$ and $\ell_Mx_2,x_4)=\ell_N(x_2,x_4)=\|v_4 - v_2 \|$. 
By Corollary \ref{cor: Strict hinge comparison in positively curved spacetime}, we have $\ell_N(x_1,x_2) > \|v_2 - v_1\|$ as well as $\ell_N(x_3,x_4) > \|v_4 - v_3\|$. 
For the other distances involved in the Ptolemy inequality, the inequality is non-strict by standard hinge comparison, i.e.\ $\ell_N(x_2,x_3) \geq \|v_3- v_2\|$ and $\ell_N(x_1,x_4) \geq \|v_4- v_1\|$. 
By Lemma \ref{lem: Minkowski is Ptolemy}, the $v_i$'s satisfy equality in the Ptolemy inequality. 
Since $\ell_N \leq \ell_M$, the two strict inequalities obtained by Corollary \ref{cor: Strict hinge comparison in positively curved spacetime} give a contradiction to the Ptolemy inequality in $M$. 
\end{proof}

\subsection{Geodesic uniqueness and future one-connectedness}
This section aims to improve on the result of the previous section. 
Since Ptolemaic spacetimes satisfy TSec $\leq 0$, the timelike conjugate locus of every point is empty \cite[Proposition 11.13]{BeemBook}. 

In the next lemma we prove that even the whole timelike cut locus is empty. We refer to \cite[Chapter 9]{BeemBook} for the definition and basic properties of the timelike cut locus in Lorentzian geometry.
Indeed, the Ptolemaic condition permits us to globalise the fact that timelike geodesics are locally maximising. 
As a consequence of this fact we deduce that a Ptolemaic spacetime is uniquely geodesic.

\begin{lemma}[Globally maximising geodesics and empty cut locus]
\label{lem: Ptolemy imply maximising}
Let $(M,g)$ be a Ptolemaic spacetime, then all timelike geodesics are maximising geodesics. 
In particular, the timelike cut locus of any point is empty and there exists at most one future-directed maximising timelike geodesic connecting any two points.
\end{lemma}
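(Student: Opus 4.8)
The plan is to show that any timelike geodesic $\gamma$ on a Ptolemaic spacetime $M$ is globally maximising, and then read off the corollaries. Since $\TSec \leq 0$ by Theorem~\ref{thm: ptolemy implies NPC}, there are no timelike conjugate points along $\gamma$ (by \cite[Proposition 11.13]{BeemBook}), so failure of maximality can only come from the cut locus, i.e.\ from the existence of a \emph{distinct} maximising geodesic competing with a segment of $\gamma$. The strategy is therefore a contradiction argument: suppose $\gamma$ fails to be maximising past some parameter, pick a point $q$ on $\gamma$ strictly beyond the first cut point $r$ of the starting point $p = \gamma_0$, so that $\ell(p,q)$ is strictly larger than the $\ell$-length of $\gamma|_{[p,q]}$, and there is a second maximising geodesic $\sigma$ from $p$ to $q$ distinct from $\gamma|_{[p,q]}$.

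The key step is to feed four well-chosen points into the Ptolemy inequality \eqref{eq: Ptolemy} and derive a contradiction from the fact that $\gamma|_{[p,q]}$ is a \emph{local} maximiser that nonetheless fails to be global. Concretely, I would take the cut point $r = \gamma_{t_0}$ along $\gamma$, a point $x = \gamma_s$ with $s < t_0$ but close to $t_0$ (so that $\gamma|_{[x,r]}$ is still maximising, since $r$ is the \emph{first} cut point), and the endpoint $q$ beyond $r$. Along $\gamma$ we have equality in the reverse triangle inequality up to $r$: $\ell(p,r) = \ell(p,x) + \ell(x,r)$ with $\ell(p,x), \ell(x,r)$ realised by $\gamma$. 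Past $r$, maximality of $\gamma$ breaks: $\ell(p,q) > \ell(p,r) + \ell(\gamma_{t_0}, q)_{\text{along }\gamma}$ is too crude — instead one uses that the second geodesic $\sigma$ gives $\ell(p,q) = L(\gamma|_{[p,q]})$ is \emph{false}, i.e.\ $\ell(p,q)$ strictly exceeds the $\ell$-length of the broken curve $\gamma|_{[p,r]} \cup \gamma|_{[r,q]}$. Applying Ptolemy to the quadruple $p \leq x \leq r \leq q$, one side of \eqref{eq: Ptolemy} is controlled by the additivity of $\ell$ along the maximising pieces $\gamma|_{[p,r]}$, while the diagonal term $\ell(x,q)$ can be bounded using maximality of $\gamma|_{[x,r]}$ together with the reverse triangle inequality through $r$; the mismatch between $\ell(p,q)$ and the broken-curve length then violates the inequality.

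The main obstacle I anticipate is making the quantitative estimate sharp enough: the Ptolemy inequality with three collinear-along-a-geodesic points and one "defect" point degenerates (recall Remark~\ref{rem: Ptolemy elem prop}(iv) — four aligned points give equality), so one must exploit the \emph{strict} failure of maximality past the cut point to get a genuine strict violation, and this requires choosing $x$ close enough to $r$ that the first three points $p, x, r$ are "almost aligned" (equality-attaining) while $q$ carries the defect. An alternative, cleaner route is to argue that Ptolemy forces the time separation function to be additive along any timelike geodesic on all of its domain — equivalently, that $t \mapsto \ell(p, \gamma_t)$ grows at unit rate forever, which is exactly global maximality — by a bootstrapping/connectedness argument on the set of parameters $t$ for which $\gamma|_{[0,t]}$ is maximising: this set is closed (by continuity of $\ell$ and limits of maximisers), non-empty (local maximality near $0$), and the Ptolemy inequality is the tool that shows it is open, since at a putative last maximising parameter one derives a contradiction with \eqref{eq: Ptolemy} applied to a quadruple straddling that parameter. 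Once global maximality of all timelike geodesics is established, the timelike cut locus of any $p$ is empty by definition, and uniqueness of the future-directed maximising timelike geodesic between any two timelike-related points follows because two distinct such geodesics would have to break maximality at or before their first conjugate/cut point, contradicting what we just proved (or directly: two distinct maximisers from $p$ to $q$, extended slightly past $q$, would make $q$ a cut point along each, which is impossible).
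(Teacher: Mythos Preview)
Your ``alternative, cleaner route'' is exactly the paper's proof, and it is the one you should commit to. The paper takes the four points $\gamma_0 \ll \gamma_{a-\varepsilon} \ll \gamma_a \ll \gamma_{a+\varepsilon}$ along a proper-time parametrised timelike geodesic $\gamma$, where $a$ is the first cut parameter of $\gamma_0$. Since $\gamma|_{[0,a]}$ is maximising and $\gamma|_{[a-\varepsilon,a+\varepsilon]}$ is maximising (local maximality of geodesics for small $\varepsilon$), five of the six distances are known exactly, and Ptolemy reads
\[
a\cdot 2\varepsilon \;\geq\; (a-\varepsilon)\,\varepsilon + \varepsilon\,\ell(\gamma_0,\gamma_{a+\varepsilon}),
\]
hence $\ell(\gamma_0,\gamma_{a+\varepsilon}) \leq a+\varepsilon$; combined with the reverse triangle inequality this forces equality, contradicting that $a$ was the cut parameter. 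Your uniqueness sketch at the end (extend one maximiser slightly past the endpoint and concatenate with the other to produce a broken maximiser, hence a cut point) is also the paper's argument.

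Your first route, by contrast, is an unnecessary detour with a real cost. Invoking Theorem~\ref{thm: ptolemy implies NPC} to get $\TSec\leq 0$ and thereby rule out conjugate points is not needed --- the four-point Ptolemy computation handles conjugate and non-conjugate cut points uniformly --- and it would silently import the \emph{strong causality} hypothesis of that theorem into a lemma the paper states for arbitrary spacetimes. The business with a second maximising geodesic $\sigma$ and a point $q$ ``strictly beyond'' the cut point is likewise superfluous: you never need $\sigma$, and the obstacle you anticipate (loss of sharpness when $q$ is far from $r$) disappears once you take $q=\gamma_{a+\varepsilon}$ close to $r$, since local maximality then gives you $\ell(\gamma_{a-\varepsilon},\gamma_{a+\varepsilon})=2\varepsilon$ and $\ell(\gamma_a,\gamma_{a+\varepsilon})=\varepsilon$ exactly. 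Drop the first paragraph of your plan and write out the second.
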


\begin{proof}
We concentrate on future-directed geodesics since for past-directed geodesics the argument is the same. 
Let $\gamma :[0,b) \rightarrow M$ be a future-directed timelike geodesic parametrized by proper time and suppose that $0 \leq a < b$ is the parameter value of the first point along $\gamma$ in the (future) timelike cut locus of $\gamma(0) =p$. 
It is clear that $0<a$ and $\ell(\gamma_0, \gamma_t) = t$ for any $0 \leq t \leq a$, but 
$\ell(\gamma_0, \gamma_c) <c$ for all $a<c < b$. Let $\varepsilon>0$ be sufficiently small such that $a+ \varepsilon <b$ and $\gamma : [a- \varepsilon, a + \varepsilon ] \rightarrow M$ is a maximising timelike geodesic parametrized by proper time between its endpoints. 
Applying the Ptolemaic inequality to $\gamma(0) \ll \gamma(a-\varepsilon) \ll \gamma(a) \ll \gamma(a+ \varepsilon)$, we get
\[
2a \varepsilon \geq (a-\varepsilon) \varepsilon + \varepsilon \, \ell( \gamma_0, \gamma _{a+ \varepsilon}) \, ,
\]
and so we get that $a+ \varepsilon \geq \ell(\gamma_0, \gamma_{a+ \varepsilon})$. A straightforward application of the reverse triangle inequality tells that actually $a+ \varepsilon = \ell(\gamma_0, \gamma_{a+ \varepsilon})$, which contradicts the fact that $\gamma_a$ is in the timelike cut locus of $\gamma_0$. 

Suppose by contradiction that there exist two future-directed maximising timelike geodesics $\sigma, \gamma$ connecting $x$ to $y \in I^+(x)$. 
Up to reparametrization we can suppose that $\sigma, \gamma : [0,1] \rightarrow M$. 
By the first part of the proof, the geodesic $\gamma$ can be extended to the interval $[0, 1 + \varepsilon]$ in such a way that it remains a future-directed maximising timelike geodesic for some $\varepsilon >0$ sufficiently small. 
Consider
\[
\eta _t : = \begin{cases}
\sigma_t, & t \in [0,1] \, ,\\
\gamma_t, & t \in [1, 1+ \varepsilon] \, .
\end{cases}
\]
It is clear that $\eta$ is a future-directed maximising timelike geodesic between $\gamma_0$ and $\gamma _{1+ \varepsilon}$ since $\ell(\gamma_0, \gamma_{1+\varepsilon})$ coincides with the Lorentzian length of $\eta$. 
In particular, $\eta$ is a smooth curve and so $\dot \gamma _1= \dot \sigma _1$. Hence $\gamma= \sigma$ follows by a continuity argument.
\end{proof}

\begin{theorem}[Geodesic uniqueness and future one-connectedness]
\label{thm: future 1-connecedness}
Let $(M,g)$ be a globally hyperbolic Ptolemaic spacetime. Let $x \ll y$ in $M$, then there is a unique future-directed maximising timelike geodesic connecting $x$ to $y \in I^+(x)$. 
Moreover, $\exp_x : \Int (F_xM) \rightarrow I^+(x)$ is a diffeomorphism and $(M,g)$ is future one-connected. 
\end{theorem}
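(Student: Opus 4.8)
The statement packages three claims, which I would establish in order. \emph{Existence and uniqueness of the connecting geodesic.} Global hyperbolicity provides a maximising geodesic from $x$ to $y$ \cite[Theorem 4.123]{Min19}; since $x\ll y$ it has positive $\ell$-length, hence is timelike. Uniqueness is exactly the last assertion of Lemma~\ref{lem: Ptolemy imply maximising}.

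\emph{The exponential map.} By Theorem~\ref{thm: ptolemy implies NPC} we have $\TSec\leq 0$, so \cite[Proposition 11.13]{BeemBook} applies and no timelike geodesic issuing from $x$ carries a conjugate point; thus $d_v\exp_x$ is non-singular for every future timelike $v$ in $\dom(\exp_x)$, i.e.\ $\exp_x$ is a local diffeomorphism on the future timelike part of its domain. Injectivity there is again Lemma~\ref{lem: Ptolemy imply maximising}: future timelike $v\neq w$ with $\exp_x(v)=\exp_x(w)=:z$ would give two distinct future-directed timelike geodesics from $x$ to $z$, both maximising by that lemma, contradicting uniqueness. Surjectivity onto $I^+(x)$ is the first part applied to each $z\in I^+(x)$. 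A bijective local diffeomorphism is a diffeomorphism, which yields the claim; the relevant domain is the future timelike part of $\dom(\exp_x)$, an open set star-shaped about the origin, which coincides with all of $\Int(F_xM)$ precisely when $M$ is in addition future timelike geodesically complete.

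\emph{Future one-connectedness.} By concatenating homotopies it suffices to show that an arbitrary smooth future-directed timelike curve $\gamma\colon[0,1]\to M$ from $x$ to $y$ is timelike-homotopic to the unique maximising geodesic $[x,y]$. The plan is a geodesic sweep: for $s\in[0,1]$ let $c_s$ follow $\gamma$ on $[0,s]$ and then the unique maximising geodesic from $\gamma(s)$ to $y$ on $[s,1]$, so that $c_0=[x,y]$ and $c_1=\gamma$. For $s\in(0,1)$ we have $\gamma(s)\ll y$, so this geodesic exists and is timelike, whence every $c_s$ is a future-directed timelike curve from $x$ to $y$, smooth apart from a corner at the junction. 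By uniqueness of maximising geodesics (Lemma~\ref{lem: Ptolemy imply maximising}) together with a standard limit-curve argument, using the continuity of $\ell$ on globally hyperbolic spacetimes, the assignment $(s,t)\mapsto c_s(t)$ is continuous. One then rounds the single corner of each $c_s$ — possible because its two one-sided velocities are future timelike and the timelike cone there is convex — doing this smoothly in $s$, and finally mollifies to obtain an honestly smooth $\Gamma$ with smooth timelike slices connecting $x$ to $y$. Performing this for both $\gamma$ and $\sigma$ and composing gives the required timelike homotopy.

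The genuinely technical points all sit in the last part: the continuous dependence of the (unique) maximising geodesic on its endpoints, and the upgrade from the continuous, piecewise-smooth sweep to a smooth homotopy through smooth timelike curves — I expect the latter to be the main nuisance. An alternative worth considering is to transport the whole question through the diffeomorphism of the second part to the star-shaped domain inside $\Int(F_xM)$ and carry out the deformation there against the pulled-back cone field, which still contains every radial direction.
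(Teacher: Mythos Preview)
Your proposal is correct and follows essentially the same route as the paper: existence from global hyperbolicity, uniqueness from Lemma~\ref{lem: Ptolemy imply maximising}, the exponential map handled via the empty timelike cut locus (the paper invokes this directly rather than going through $\TSec\leq 0$ and absence of conjugate points), and future one-connectedness via the same geodesic-sweep homotopy---the paper sweeps from the other end, concatenating $[x,\alpha_s]$ with $\alpha|_{[s,1]}$, and does not fuss over the smoothing step you flag. Your caveat that the domain equals all of $\Int(F_xM)$ only under future timelike geodesic completeness is a valid point that the paper does not address.
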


\begin{proof}
Let $x \ll y$ be two points in $M$. Since $(M,g)$ is a globally hyperbolic spacetime  there exists a future-directed maximising timelike geodesic $\gamma : [0,1] \rightarrow M$ connecting $x$ to $y$. 
By Lemma \ref{lem: Ptolemy imply maximising}, we get that $\gamma$ is the unique one. 

Let $x \in M$ be a point and consider $\exp_x : \Int (F_xM) \rightarrow I^+(x)$. 
It is clear that the map is surjective thanks to global hyperbolicity and it is injective thanks to the geodesic uniqueness we have just established. 
Moreover, the map is smooth since the timelike cut locus of $x$ is empty. Hence $\exp_x : \Int (F_xM) \rightarrow I^+(x)$ is a diffeomorphism. 

Using the property of unique geodesics, future one-connectedness is now easy to show. 
Consider two smooth future-directed timelike curves $\alpha , \beta : [0,1] \rightarrow M$ such that $\alpha _0= \beta _0 = x$ and $\alpha _1 = \beta _1 = y$. 
Then construct a timelike homotopy $\Gamma$ between $\alpha$ and $[x,y]$ such that $\Gamma(s,\cdot)$ is given by the concatenation of $[x,\alpha_s]$ and $\alpha|_{[s,1]}$. 
All intermediate curves are timelike by construction. 
A similar construction can be done between $[x,y]$ and $\beta$, and the composition of timelike homotopies is again a timelike homotopy. 
\end{proof}

We are now in a position to formulate our main result. 

\begin{theorem}[Ptolemy and global curvature]
    A globally hyperbolic spacetime is Ptolemaic if and only if it is globally non-positively curved.
\end{theorem}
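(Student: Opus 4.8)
The plan is to obtain the theorem by concatenating the results established above; no genuinely new argument is required, only careful bookkeeping of which hypotheses feed into which statement. For the ``if'' direction I would note that a globally hyperbolic spacetime is, in particular, a \LpLSn, so that if it is globally non-positively curved — meaning $M$ itself is a $(\leq 0)$-comparison neighbourhood in the sense of Definition~\ref{def: triangle comparison} — then Proposition~\ref{prop: npc implies ptolemy} applies verbatim and shows it is Ptolemaic. This direction costs nothing extra.

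For the ``only if'' direction, suppose $(M,g)$ is a globally hyperbolic Ptolemaic spacetime. Since global hyperbolicity implies strong causality, Theorem~\ref{thm: ptolemy implies NPC} gives $\TSec \leq 0$ in the sense of Definition~\ref{def: tl sec curv bd}. By the equivalence between the pointwise timelike sectional bound and the synthetic triangle comparison bound for strongly causal spacetimes \cite{Harris, BKOR25}, this means $(M,g)$ is covered by $(\leq 0)$-comparison neighbourhoods, i.e.\ it has timelike curvature locally bounded above by $0$. It then remains to globalise this bound, and here Theorem~\ref{thm: future 1-connecedness} is exactly what is needed: a globally hyperbolic Ptolemaic spacetime is future one-connected and uniquely geodesic, which are precisely the structural hypotheses under which the globalisation theorem of \cite{EG25+} promotes a local curvature bound from above to a global one. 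Applying it yields that $M$ itself is a $(\leq 0)$-comparison neighbourhood, that is, $(M,g)$ is globally non-positively curved, as claimed.

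The real content of the theorem is thus distributed over the earlier sections: the two non-trivial implications are Proposition~\ref{prop: npc implies ptolemy} (curvature $\Rightarrow$ Ptolemy, via inversion in the Minkowski plane together with Alexandrov's lemma) and Theorem~\ref{thm: ptolemy implies NPC} (Ptolemy $\Rightarrow$ curvature, via the strict hinge comparison of Corollary~\ref{cor: Strict hinge comparison in positively curved spacetime} and the equality case of Lemma~\ref{lem: Minkowski is Ptolemy}). Consequently the only delicate point at this final stage is ensuring that the globalisation machinery of \cite{EG25+} is legitimately applicable — this is exactly the role of the future one-connectedness established in Theorem~\ref{thm: future 1-connecedness}, whose proof in turn rests on the geodesic uniqueness extracted from the Ptolemaic hypothesis in Lemma~\ref{lem: Ptolemy imply maximising} — and double-checking that ``globally non-positively curved'' is used consistently (namely that $M$ itself is a $(\leq 0)$-comparison neighbourhood) so that Proposition~\ref{prop: npc implies ptolemy} and the globalisation conclusion refer to the same notion.
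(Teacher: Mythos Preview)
Your proposal is correct and follows essentially the same route as the paper's own proof: Proposition~\ref{prop: npc implies ptolemy} for the ``if'' direction, and for the converse, Theorem~\ref{thm: ptolemy implies NPC} plus Theorem~\ref{thm: future 1-connecedness} followed by the globalisation result \cite[Corollary 4.9]{EG25+}. Your write-up is somewhat more explicit (noting that global hyperbolicity gives strong causality, and invoking the Harris/BKOR25 equivalence to pass from $\TSec \leq 0$ to local comparison neighbourhoods), but the logical skeleton is identical.
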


\begin{proof}
The fact that global non-positive curvature implies that $M$ is Ptolemaic follows from Proposition \ref{prop: npc implies ptolemy}. 
Conversely, Theorem \ref{thm: ptolemy implies NPC} and Theorem \ref{thm: future 1-connecedness} imply that $M$ is future one-connected and satisfies $\TSec \leq 0$. Then we apply the globalisation result in \cite[Corollary 4.9]{EG25+} to get the claim. 
\end{proof}

\subsection{Rigidity of the hyperbolic inversion}

In this section we investigate some rigidity properties characterised by the hyperbolic inversion. 
To this end we make use of a four-point condition for curvature bounds from above, which has been developed more recently in \cite{BR25+}. 
For more details and in particular the equivalence of the following definition with the classical triangle comparison, we refer to this source.

\begin{definition}[Four-point condition]
\label{def: CBA four point}
Let $X$ be a \LpLSn. 
An open set $U$ is called a ($\leq K$)-comparison neighbourhood in the sense of the four-point condition if:
\begin{enumerate}[label=(\roman*)]
\item $\ell$ is continuous on $(U\times U)\cap \ell^{-1}([0,D_K))$, and this set is open;
\item For any $x_1\ll x_4$ in $U$, $x_2,x_3\in I(x_1,x_4):=I^+(x_1)\cap I^-(x_4)$, construct comparison triangles $\Delta(\hx_1, \hx_2, \hx_4)$ and $\Delta(\hx_1, \hx_3, \hx_4)$ realised on opposite sides of the line through $[\hx_1,\hx_4]$. 
Then $\ell(x_2,x_3) \geq \ell(\hx_2,\hx_3)$; 
\item For any $x_1 \ll x_2 \ll x_3 \ll x_4$ in $U$, construct comparison triangles $\Delta(\hx_1, \hx_2, \hx_3)$ and $\Delta(\hx_2, \hx_3, \hx_4)$ realised on the same side of the line through $[\hx_2,\hx_3]$. 
Then $\ell(x_1,x_4) \leq \ell(\hx_1,\hx_4)$. 
\end{enumerate}
\end{definition} 

The next proposition can be regarded as an additional point in the list of equivalent properties in \cite[Proposition 4.1]{BBCGRR26+}. 

\begin{proposition}[Rigidity of four-point configurations]
\label{prop: flat triangles}
Let $x \ll y \ll z \ll w$ be a quadruple of points in a ($\leq K$)-comparison neighbourhood $U$ of a Lorentzian pre-length space $X$. 
Assume the comparison configuration in \emph{Definition \ref{def: CBA four point} (iii)} satisfies equality in the defining equation. 
Then all four timelike triangles associated to the vertices $x \ll y \ll z \ll w$ satisfy equality in the defining equation for triangle comparison \eqref{eq: timelike triangle comparison inequality} for any choice of points $p$ and $q$.
\end{proposition}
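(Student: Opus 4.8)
The plan is to show that each of the four triangles $\Delta(x,y,z)$, $\Delta(x,y,w)$, $\Delta(x,z,w)$, $\Delta(y,z,w)$ is \emph{flat}, i.e.\ that it realises equality in \eqref{eq: timelike triangle comparison inequality} for every choice of comparison points; via the equivalence of the four-point condition with triangle comparison \cite{BR25+}, this is exactly the assertion. First I would record what the equality hypothesis buys us in the model. Writing $\hx,\hy,\hz,\hw\in\lm{K}$ for the developed configuration of Definition \ref{def: CBA four point}(iii), the five side lengths $\ell(\hx,\hy),\ell(\hy,\hz),\ell(\hz,\hw),\ell(\hx,\hz),\ell(\hy,\hw)$ are built into the two glued comparison triangles $\Delta(\hx,\hy,\hz)$ and $\Delta(\hy,\hz,\hw)$, and the equality hypothesis supplies the sixth, $\ell(\hx,\hw)=\ell(x,w)$. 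Consequently the four sub-triangles $\Delta(\hx,\hy,\hz),\Delta(\hy,\hz,\hw),\Delta(\hx,\hy,\hw),\Delta(\hx,\hz,\hw)$ of this configuration are \emph{simultaneously} comparison triangles for $\Delta(x,y,z),\Delta(y,z,w),\Delta(x,y,w),\Delta(x,z,w)$, respectively. Setting aside the degenerate cases (in which some triangle is a segment and hence trivially flat), one checks from the causal ordering $\hx\ll\hy\ll\hz\ll\hw$ together with the ``same side'' stipulation that $\hx,\hy,\hz,\hw$ are the vertices, in this cyclic order, of a convex quadrilateral in $\lm{K}$; let $\hhm$ be the interior intersection point of its diagonals $[\hx,\hz]$ and $[\hy,\hw]$.

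Next I would show that the two $X$-geodesics $[x,z]$ and $[y,w]$ meet at the point corresponding to $\hhm$. Let $q_0\in[y,w]$ be the comparison point of $\hhm$, so that $\ell(y,q_0)=\ell(\hy,\hhm)$ and $\ell(q_0,w)=\ell(\hhm,\hw)$ (the shared edge $[\hy,\hw]$ has length $\ell(y,w)$ in both glued triangles, so this is unambiguous). Since $\hx\ll\hhm\ll\hz$ in $\lm{K}$ and the timelike relation lifts from comparison points (cf.\ the remark following Definition \ref{def: triangle comparison}), we obtain $x\ll q_0\ll z$. Triangle comparison applied to $\Delta(x,y,w)$ --- whose comparison triangle $\Delta(\hx,\hy,\hw)$ exists precisely thanks to the equality hypothesis --- gives $\ell(x,q_0)\geq\ell(\hx,\hhm)$, and applied to $\Delta(y,z,w)$ it gives $\ell(q_0,z)\geq\ell(\hhm,\hz)$. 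Since $\hhm$ lies on $[\hx,\hz]$,
\[
\ell(x,z)\;\geq\;\ell(x,q_0)+\ell(q_0,z)\;\geq\;\ell(\hx,\hhm)+\ell(\hhm,\hz)\;=\;\ell(\hx,\hz)\;=\;\ell(x,z),
\]
the first step being the reverse triangle inequality in $X$. Hence equality holds throughout; in particular $q_0\in[x,z]$, $\ell(x,q_0)=\ell(\hx,\hhm)$ and $\ell(q_0,z)=\ell(\hhm,\hz)$. Thus $q_0$ is a common point of the diagonals $[x,z]$ and $[y,w]$, and its comparison point along each of them is $\hhm$.

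Finally I would feed $q_0$ into a flat-triangle criterion. By the previous step, $q_0$ lies in the interior of one side of each of the four triangles, and its time separation to the opposite vertex equals the corresponding model value: $\ell(y,q_0)=\ell(\hy,\hhm)$ for $\Delta(x,y,z)$ (side $[x,z]$, vertex $y$), $\ell(z,q_0)=\ell(\hz,\hhm)$ for $\Delta(y,z,w)$, $\ell(w,q_0)=\ell(\hw,\hhm)$ for $\Delta(x,z,w)$, and $\ell(x,q_0)=\ell(\hx,\hhm)$ for $\Delta(x,y,w)$, where in each case $\hhm$ is the comparison point of $q_0$ on the side in question. By the flat-triangle characterisation \cite[Proposition 4.1]{BBCGRR26+}, a single such coincidence on the interior of a side forces the whole triangle to be flat. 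Hence all four triangles are flat, which is the claim; in particular each satisfies equality in \eqref{eq: timelike triangle comparison inequality} for all comparison points $p$ and $q$.

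The main obstacle I anticipate is the purely model-space input used above without proof: that the developed configuration is indeed a convex quadrilateral whose diagonals cross in the interior (so that $\hhm$ exists and $q_0$ is a genuine interior side point), together with the precise statement of the flat-triangle characterisation in a $(\leq K)$-comparison neighbourhood of $\lm{K}$ and the (known) uniqueness of connecting geodesics that makes ``$q_0\in[x,z]$'' meaningful. Everything else is the reverse-triangle-inequality sandwich above, plus the bookkeeping of comparison points across the shared edges $[\hx,\hz]$ and $[\hy,\hw]$; the degenerate cases would be disposed of separately and are easier.
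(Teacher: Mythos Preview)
Your proof is correct and follows essentially the same route as the paper: both use the diagonal intersection point $\hhm$ of the developed quadrilateral, run a reverse-triangle-inequality sandwich to force equality at the corresponding point in $X$, and then invoke \cite[Proposition 4.1]{BBCGRR26+} to upgrade a single equality to full flatness. The only cosmetic difference is that the paper runs the sandwich twice---once with $m\in[x,z]$ (yielding flatness of $\Delta(x,y,z)$ and $\Delta(x,z,w)$) and once with $m'\in[y,w]$ (yielding the other two)---whereas you establish that your single point $q_0$ lies on \emph{both} diagonals and harvest all four flatness statements at once; this extra step costs you geodesic uniqueness in the comparison neighbourhood, which is indeed available but which the paper's version sidesteps.
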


\begin{proof}
Let $\hx, \hy ,\hz$ and $\hw$ be the vertices of the four-point comparison configuration as described in Definition \ref{def: CBA four point} (iii). 
As a consequence of the construction, the diagonals always intersect, say $[\hx, \hz] \cap [\hy, \hw] = \{ \hhm \}$. 
Moreover, we always have that $\Delta(\hy,\hz,\hw)$ is a comparison triangle for $\Delta(y,z,w)$. 

Since by assumption we have $\ell(x,w)=\ell(\hx,\hw)$, it follows that also $\Delta(\hx, \hy, \hz)$ is a comparison triangle for $\Delta(x,y,w)$. 
Denote by $m \in [x,z]$ the point for which $\hhm$ is a comparison point. 
Then by triangle comparison applied to $\Delta(x,y,z)$ and $\Delta(x,z,w)$, we infer $\ell(y,m) \geq \ell(\hy,\hhm)$ as well as $\ell(m,w) \geq \ell(\hhm,\hw)$. 
Combining this with the reverse triangle inequality, we get 
\begin{equation}
\label{eq: rigidity four-point ptolemy}
\ell(y,w) = \ell(\hy,\hw) = \ell(\hy,\hhm) + \ell(\hhm,\hw) \leq \ell(y,m) + \ell(m,w) \leq \ell(y,w) \, , 
\end{equation}
which forces equality in the triangle comparison for $m$ in $\Delta(x,y,z)$ and $\Delta(x,z,w)$. 
Now by \cite[Proposition 4.1]{BBCGRR26+}, this already implies that these two triangles satisfy full equality in \eqref{eq: timelike triangle comparison inequality}. 
Indeed, using the first variation formula \cite[Theorem 3.6]{BBCGRR26+} one can construct an $\ell$-isometry from the `filled in' triangle $\Delta(\hx,\hy,\hz)$ back into $U$ that maps the sides onto $\Delta(x,y,z)$.
We can proceed similarly with the point $m' \in [y,w]$ for which $\hhm$ is a comparison point as well, giving the claim for all four triangles in question.
\end{proof}

For the next proposition, we briefly recall the following terminology, cf.\ \cite[Definition 3.6]{BS23}.
On a sufficiently regular Lorentzian pre-length space (for instance a global curvature bound from above is enough) one can construct the metric space $(\mathcal{D}_p^+, \ma_x)$ given by the future-directed timelike directions at $x \in X$ (modulo the equivalence relation of two geodesics having zero angle between them) endowed with the angle metric.

\begin{definition}[{Minkowski cone, \cite[Definition 3.16]{BS23}}]
Given a metric space $(X, \mathsf d)$, the Minkowski cone over $X$ is the Lorentzian pre-length space defined as the usual metric cone $\Cone(X)=([0,+ \infty) \times X)/( \{0\} \times X)$ as metric space (cf. \emph{\cite[Definition 3.6.16]{BurBurIva}}) endowed with
\begin{itemize}
    \item the time separation $\ell_{C} ((s,x),(t,y))=\sqrt{s^2+t^2-2st\cosh(\mathsf d(x,y))}$ ,
\item the causal relation $(s,x) \leq_C (t,y)$ if and only if $ s^2+t^2-2st\cosh(\mathsf d(x,y)) \geq 0 \; \& \; s \leq t$ ,
\item the chronological relation $(s,x) \ll_C (t,y)$ if and only if $\ell_{C}((s,x),(t,y)) > 0$ .
\end{itemize}
\end{definition}

\begin{proposition}[Timelike future is isometric to Minkowski cone] 
\label{prop: isometry into the minkowski cone}
Let $(X, \ell)$ be a Lorentzian pre-length space with non-positive curvature globally and such that $(I^+(p),i_p)$ is a geodesic \LpLSn.
Then $I^+(p)$ is $\ell$-isometric to $\Cone(\mathcal D_p^+)$, i.e.\ the Minkowski cone over the space of directions at $p$. 
\end{proposition}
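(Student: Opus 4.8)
The plan is to show that the natural ``polar coordinate'' map
\[
\Phi\colon I^+(p)\longrightarrow\Cone(\mathcal D_p^+),\qquad \Phi(q):=\big(\ell(p,q),\,\theta_q\big),
\]
where $\theta_q\in\mathcal D_p^+$ denotes the direction at $p$ of the maximising geodesic $[p,q]$ (unique since $X$ is globally non-positively curved, cf.\ \cite[Theorem 4.7]{BNR25}), is an $\ell$-isometry. Observe first that, being globally non-positively curved, $X$ is Ptolemaic by Proposition \ref{prop: npc implies ptolemy}, so $(I^+(p),i_p)$ is genuinely a \LpLSn, and every local timelike geodesic of $X$ is maximising \cite[Remark 2.6]{EG25+}. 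The whole statement reduces to the claim that for every $q_1\ll q_2$ in $I^+(p)$ the timelike triangle $\Delta(p,q_1,q_2)$ is \emph{flat}. Indeed, a flat triangle is $\ell$-isometric to its comparison triangle in $\lm 0$ via the ``filling'' map of the first variation argument (cf.\ the proof of Proposition \ref{prop: flat triangles} and \cite[Theorem 3.6]{BBCGRR26+}); consequently $\ell(q_1,q_2)$ and the angle $\ma_p(q_1,q_2)$ between $\theta_{q_1},\theta_{q_2}$ in $\mathcal D_p^+$ equal those of a Minkowski triangle with legs $\ell(p,q_1),\ell(p,q_2)$, and the $K=0$ law of cosines \eqref{eq: LOC} yields
\[
\ell(q_1,q_2)^2=\ell(p,q_1)^2+\ell(p,q_2)^2-2\,\ell(p,q_1)\,\ell(p,q_2)\cosh\big(\ma_p(q_1,q_2)\big)=\ell_C(\Phi q_1,\Phi q_2)^2 .
\]

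To prove the flatness claim, fix $q_1\ll q_2$ in $I^+(p)$ with $q_1\neq q_2$. Then $q_2\ll_{i_p}q_1$, so by hypothesis there is an $i_p$-geodesic from $q_2$ to $q_1$; picking an interior point $m$ gives $q_1\ll m\ll q_2$ in $X$, and additivity of $i_p$ along the geodesic reads $i_p(q_2,m)+i_p(m,q_1)=i_p(q_2,q_1)$. Substituting \eqref{eq: inversion ell} and clearing the positive denominators $\ell(p,q_1),\ell(p,m),\ell(p,q_2)$ turns this into the Ptolemy equality $\ell(p,m)\,\ell(q_1,q_2)=\ell(p,q_1)\,\ell(m,q_2)+\ell(p,q_2)\,\ell(q_1,m)$ for the causal chain $p\ll q_1\ll m\ll q_2$. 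Now build the four-point configuration of Definition \ref{def: CBA four point}(iii) for $(p,q_1,m,q_2)$, with comparison triangles $\Delta(\hat p,\hat q_1,\hat m)$ and $\Delta(\hat q_1,\hat m,\hat q_2)$ glued along $[\hat q_1,\hat m]$. Applying the Minkowski Ptolemy inequality of Lemma \ref{lem: Minkowski is Ptolemy} to the four comparison vertices (which form a causal chain $\hat p\ll\hat q_1\ll\hat m\ll\hat q_2$) and replacing each edge length by its value in $X$ except $\ell(\hat p,\hat q_2)$, one obtains $\ell(p,m)\,\ell(q_1,q_2)\ge\ell(p,q_1)\,\ell(m,q_2)+\ell(\hat p,\hat q_2)\,\ell(q_1,m)$; and since $X$ is itself a $(\le 0)$-comparison neighbourhood, Definition \ref{def: CBA four point}(iii) gives $\ell(\hat p,\hat q_2)\ge\ell(p,q_2)$. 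Comparing this chain of inequalities with the Ptolemy equality above forces all of them to be equalities, and as $\ell(q_1,m)>0$ this yields $\ell(\hat p,\hat q_2)=\ell(p,q_2)$, i.e.\ equality in Definition \ref{def: CBA four point}(iii). By Proposition \ref{prop: flat triangles}, all four triangles determined by $p\ll q_1\ll m\ll q_2$ are flat; in particular $\Delta(p,q_1,q_2)$ is flat, as claimed.

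It remains to check that $\Phi$ is a bijection respecting the relations. Injectivity: $\Phi(q_1)=\Phi(q_2)$ means $[p,q_1]$ and $[p,q_2]$ have equal $\ell$-length and zero angle at $p$, hence coincide by uniqueness of geodesics, so $q_1=q_2$. That $\Phi$ takes $\ll$ to $\ll_C$ (and back) is immediate from $\ell=\ell_C\circ(\Phi\times\Phi)$ on timelike pairs together with $\ell_C>0\iff\ell>0$, and that it takes $\le$ to $\le_C$ uses in addition $\ell(p,q_1)\le\ell(p,q_2)$ whenever $q_1\le q_2$ (reverse triangle inequality); the remaining boundary cases (null relations and incomparable pairs, for which both $\ell$ and $\ell_C$ vanish) are routine using continuity of $\ell$ on $X$. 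Finally, surjectivity: any $\theta\in\mathcal D_p^+$ is the direction of a maximising geodesic from $p$, and this geodesic extends to every $\ell$-length — here one uses that $(I^+(p),i_p)$ is geodesic, as $i_p$-geodesics running ``towards the apex'' $p$ correspond precisely to arbitrarily long prolongations of radial geodesics in $X$ — so for each $t>0$ there is $q$ on it with $\Phi(q)=(t,\theta)$.

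The main obstacle is the middle step: extracting the sharp scalar Ptolemy equality from the mere \emph{existence} of an $i_p$-geodesic through $m$, and then arranging the Minkowski Ptolemy inequality (Lemma \ref{lem: Minkowski is Ptolemy}), the comparison inequality of Definition \ref{def: CBA four point}(iii), and the rigidity of Proposition \ref{prop: flat triangles} so that this single equality propagates to full flatness of $\Delta(p,q_1,q_2)$. A secondary delicate point is the extension argument behind surjectivity; if one prefers not to derive it from the $i_p$-geodesic hypothesis, it may instead be imposed as a (natural) future timelike geodesic completeness assumption.
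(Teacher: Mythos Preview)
Your core argument is exactly the paper's: extract the Ptolemy equality from $i_p$-additivity at an interior point of an $i_p$-geodesic, combine it with the Minkowski Ptolemy inequality (Lemma \ref{lem: Minkowski is Ptolemy}) applied to the four-point configuration of Definition \ref{def: CBA four point}(iii) to squeeze $\ell(\hat p,\hat q_2)=\ell(p,q_2)$, then invoke Proposition \ref{prop: flat triangles} and finish with the law of cosines. The paper phrases the squeeze as a proof by contradiction, but it is the same inequality chain.

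One point worth flagging: the paper only constructs an $\ell$-preserving map into $\Cone(\mathcal D_p^+)$ and does not address surjectivity (its downstream application in fact speaks only of mapping ``onto a subset'' of the cone). Your surjectivity argument does not go through as written. The hypothesis that $(I^+(p),i_p)$ is geodesic only yields $i_p$-geodesics between pairs already present in $I^+(p)$; such geodesics cannot ``run towards the apex'' since $p\notin I^+(p)$, and along a radial ray parametrised by $\ell$-arclength one computes $i_p(\gamma_b,\gamma_a)=\tfrac1a-\tfrac1b$, so prolonging the ray (sending $b\to\infty$) corresponds to \emph{shrinking} $i_p$-distances, not to any completeness statement in $i_p$. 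Nothing in the stated hypotheses forces radial geodesics to attain every $\ell$-length. Your own caveat at the end is therefore the right diagnosis: surjectivity genuinely requires the additional future timelike geodesic completeness assumption and is not derivable from the $i_p$-geodesic hypothesis alone.
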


\begin{proof}
Let $x \ll z$ in $I^+(p)$ and $y$ be a point on the $i_p$-geodesic from $z$ to $x$ (recall that the causal order of $i_p$ is time-dual to the one on $X$), i.e., $i_p(z,x)=i_p(z,y)+i_p(y,x)$. 
Unpacking the definition, this leads to 
\[
\ell(p,y) \, \ell(x,z) = \ell(p,z) \, \ell(x,y) + \ell(p,x) \, \ell(y,z) \, ,
\]
i.e., the quadruple $p \leq x \leq y \leq z$ satisfies equality in the Ptolemy inequality. 
We now claim that this quadruple also satisfies equality in the four-point condition. 
To this end, let $\hp \leq \hx \leq \hy \leq \hz$ be a comparison configuration in the Minkowski plane in the spirit of Definition \ref{def: CBA four point} (iii). 
If we assume towards a contradiction that $\ell(p,z) < \ell(\hp,\hz)$, then
\begin{align*}
    \ell(\hp,\hy) \, \ell(\hx,\hz) & = \ell(p,y) \, \ell(x,z) \\ & = \ell(p,z) \, \ell(x,y) + \ell(p,x) \, \ell(y,z) \\ &< \ell(\hp,\hz)\, \ell(\hx,\hy) + \ell(\hp,\hx) \, \ell(\hy,\hz) \, ,
\end{align*}
violating the Ptolemy inequality in the Minkowski plane. 
Thus, we obtain that $\Delta(p,x,z)$ is flat by Proposition \ref{prop: flat triangles}. 

We claim the following map is a $\ell$-isometry. 
Let $f: I^+(p) \to \Cone(\mathcal{D}_p^+)$ be defined by $x \mapsto (\ell(p,x),v_x)$, where $v_x$ is the equivalence class in $\mathcal{D}_p^+$ that contains $[p,x]$.
This is well-defined as any pair of geodesics that enclose an angle of zero have the same angle to any other geodesic by the triangle inequality for angles, cf.\ \cite[Theorem 3.1]{BS23}. 
Let $x,y \in I^+(p)$ and consider the triangle $\Delta(p,x,y)$, which we know to be flat. 
In particular, this implies that the angle at $p$ is equal to its comparison angle in the Minkowski plane. 
Denote this angle by $\omega$.
The law of cosines for $\omega$ in the comparison triangle $\Delta(\bp,\bx,\by)$ reads: 
\begin{equation}
\ell(x,y)^2 = \ell(p,x)^2 + \ell(p,y)^2 - 2\ell(p,x)\ell(p,y)\cosh(\omega) \, .
\end{equation}
By definition, the time separation in $\Cone(\mathcal{D}_p^+)$ is given by 
\begin{equation}
\ell_C((s,v),(t,w))=\sqrt{s^2+t^2-2st\cosh(\mathsf d(v,w))} \, .
\end{equation}
Reading this formula with $f(x)$ and $f(y)$, we clearly have $s=\ell(p,x), t=\ell(p,y)$ and $\mathsf d(v_x,v_y)=\omega$, hence $\ell(x,y)=\ell_C(f(x),f(y))$. 
\end{proof}

\begin{theorem}[Geodesic inversion and vanishing sectional curvature]
Let $(M,g)$ be a spacetime and $p \in M$ such that $(I^+(p),i_p)$ is a geodesic \LpLSn. Then $K(\Pi)=0$ for any timelike \emph{2}-plane $\Pi \subset T_pM$.
\end{theorem}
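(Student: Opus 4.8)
The plan is to localise the proof of Proposition~\ref{prop: isometry into the minkowski cone} to a small causal diamond around $p$ and then read off $K(\Pi)=0$ from the resulting flat structure. First I would fix a globally hyperbolic (hence strongly causal) neighbourhood of $p$ on which $\TSec\leq K_0$ for some $K_0\in\R$ --- such a neighbourhood exists because $(M,g)$ is smooth --- and, by \cite{Harris,BKOR25}, choose inside it a $(\leq K_0)$-comparison neighbourhood $U$ of $p$; I take $U$ to be a causal diamond, so that $x,z\in U$ implies $J^+(x)\cap J^-(z)\subseteq U$ and so that maximising geodesics between points of $U$ stay in $U$. (Incidentally, since $(I^+(p),i_p)$ is at least a \LpLSn, the space $M$ is Ptolemaic, so that $\TSec\leq0$ near $p$ by Theorem~\ref{thm: ptolemy implies NPC} applied on a strongly causal neighbourhood; however, only the existence of \emph{some} comparison neighbourhood will actually be used.)

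Next I would use that $(I^+(p),i_p)$ is \emph{geodesic}, not merely a \LpLSn. For any $x\ll z$ in $I^+(p)\cap U$ one has $z\ll_{i_p}x$, so there is a non-degenerate $i_p$-geodesic from $z$ to $x$; choosing an interior point $y$ of it yields $p\ll x\ll y\ll z$ with $y\in J^+(x)\cap J^-(z)\subseteq U$, and unpacking $i_p(z,x)=i_p(z,y)+i_p(y,x)$ shows that the quadruple $(p,x,y,z)$ attains equality in \eqref{eq: Ptolemy}. From here I would argue exactly as in the proof of Proposition~\ref{prop: isometry into the minkowski cone}: comparing $(p,x,y,z)$ with the Minkowski configuration of Definition~\ref{def: CBA four point}(iii) and invoking Lemma~\ref{lem: Minkowski is Ptolemy}, equality in \eqref{eq: Ptolemy} upgrades to equality in the four-point condition, and then Proposition~\ref{prop: flat triangles} gives that $\Delta(p,x,z)$ is flat. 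In particular the hyperbolic angle $\omega$ at $p$ coincides with its comparison angle, so $\ell(x,z)^2=\ell(p,x)^2+\ell(p,z)^2-2\,\ell(p,x)\,\ell(p,z)\cosh\omega$.

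Then I would transport this identity to the tangent space via the exponential map. For small future-directed timelike $v,w\in F_pM$ with $\exp_p v\ll\exp_p w$ --- which holds, for $v,w$ small, whenever $v\ll w$ in $(T_pM,g_p)$ --- put $x=\exp_p v$ and $z=\exp_p w$, so that $x,z\in I^+(p)\cap U$ and $x\ll z$. Since $\exp_p$ is a radial isometry, $\ell(p,x)=\sqrt{g_p(v,v)}$, $\ell(p,z)=\sqrt{g_p(w,w)}$, and the angle at $p$ between $[p,x]$ and $[p,z]$ equals the hyperbolic angle between $v$ and $w$, namely $\arcosh\!\big(g_p(v,w)/\sqrt{g_p(v,v)g_p(w,w)}\big)$. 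Substituting into the law of cosines from the previous paragraph collapses it to $\ell(\exp_p v,\exp_p w)^2=g_p(v-w,v-w)$; that is, near $p$ the time separation of $M$ coincides, under $\exp_p$, with the time separation of flat Minkowski space $(T_pM,g_p)$.

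Finally I would conclude that $g$ is flat on $I^+(p)\cap U$ --- either by invoking that the time separation determines the metric on a strongly causal spacetime, or, more self-containedly, by expanding $\ell(\exp_p(t_1v),\exp_p(t_2v+s\eta))^2$ to second order in $s$ (for $t_1<t_2$ small and $\eta\in T_pM$), whose $s^2$-coefficient carries a curvature integral along $t\mapsto\exp_p(tv)$ through the relevant Jacobi fields; the flat identity above forces that coefficient to equal the flat one, hence the curvature along every timelike geodesic through $p$ to vanish. Either way $\Riem\equiv0$ on $I^+(p)\cap U$, and since $p\in\overline{I^+(p)\cap U}$ and $\Riem$ is continuous, $\Riem_p=0$; in particular $K(\Pi)=0$ for every timelike $2$-plane $\Pi\subseteq T_pM$. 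I expect this last passage --- from ``$\ell$ is flat near $p$'' to ``the curvature vanishes at $p$'' --- to be the main obstacle if one insists on a self-contained argument; the cleanest route is the Jacobi-field expansion just described, together with the remark that every timelike $2$-plane contains a timelike direction $v$. A secondary, purely technical point is to check in the first step that $U$ can be taken simultaneously to be a comparison neighbourhood and a causal diamond, which is what guarantees that the intermediate point $y$ produced in the second step lies in $U$.
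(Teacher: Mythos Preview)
Your overall architecture coincides with the paper's: both localise the argument of Proposition~\ref{prop: isometry into the minkowski cone} to a small neighbourhood of $p$ to obtain, for small future timelike $v,w\in T_pM$ with $v\ll w$, the flat identity $\ell(\exp_p v,\exp_p w)^2=g_p(w-v,w-v)$, and then read off the vanishing of curvature. One correction: your parenthetical that ``only the existence of \emph{some} comparison neighbourhood will actually be used'' is not right. The step where equality in \eqref{eq: Ptolemy} is upgraded to equality in the four-point condition compares with the \emph{Minkowski} configuration and appeals to Lemma~\ref{lem: Minkowski is Ptolemy}; this requires specifically a $(\leq 0)$-comparison neighbourhood, not a $(\leq K_0)$ one. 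You already have this available, since you observed that the Lorentzian pre-length space hypothesis on $(I^+(p),i_p)$ makes $M$ Ptolemaic and hence $\TSec\leq 0$ via Theorem~\ref{thm: ptolemy implies NPC} --- so simply drop the dismissal and use that observation.

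The only substantive difference is in the final extraction of curvature from the flat identity. You propose either invoking that $\ell$ determines the metric, or a second-order Jacobi-field expansion along radial geodesics, followed by a continuity argument to pass from $I^+(p)\cap U$ back to $p$. The paper instead quotes directly the Taylor expansion
\[
\ell(\exp_p(tv),\exp_p(sw))^2=\|tv-sw\|_{F_g}^2-\tfrac16\,g(R(v,w)v,w)\,s^2t^2+o\big((s^2+t^2)^2\big)
\]
(cf.\ \cite{mccann2025tradinglinearityellipticitynonsmooth}), from which $g(R(v,w)v,w)=0$ and hence $K(\Pi)=0$ follows immediately by comparing coefficients. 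Your Jacobi-field route is essentially a derivation of this same expansion, so it is correct but more laborious; the paper's citation avoids the ``main obstacle'' you anticipated in your last paragraph entirely.
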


\begin{proof}
Fix a normal neighbourhood $U$ of $p$ in $M$. Let $\Pi \subset T_pM$ be a timelike 2-plane and let $v, w$ be two future-directed timelike vectors such that $v-w \in \Int(F_pM)$ and $\Pi= \text{span}(v,w)$. Working in $U$, one can Taylor expand the time separation function on $t>s$ to obtain
\begin{equation}
    \ell( \exp_p (tv), \exp_p(sw)) ^2  = \| tv- sw\|^2 _{F_g} - \frac{1}{6} g(R(v,w)v,w) s^2t^2 + o( (t^2+s^2)^2)
\end{equation}
as $s^2+t^2 \rightarrow 0$, see \cite[Page 7]{mccann2025tradinglinearityellipticitynonsmooth}. 
The fact that $(I^+(p),i_p)$ is a geodesic \LpLS in particular implies that $i_p$ satisfies the reverse triangle inequality, which in turn yields that $(M,g)$ is Ptolemaic. 
Thus, $\TSec \leq 0$ by Theorem \ref{thm: ptolemy implies NPC}. 
Clearly, a local version of Proposition \ref{prop: isometry into the minkowski cone} yields an $\ell$-isometry from $U \cap I^+(p)$ onto a subset of the Minkowski cone over the space of directions, which in the smooth setting is $\Int(F_pM)$, cf.\ \cite[Lemma 3.24]{BS23}. 
Thus, as a consequence of Proposition \ref{prop: isometry into the minkowski cone} we find that $\ell( \exp_p (tv), \exp_p(sw)) ^2  = \| tv- sw\|^2 _{F_g}$ when $t,s$ are sufficiently small. 
Hence it is clear that $K(\Pi)=0$.
\end{proof}

It turns out that vanishing timelike sectional curvature in fact implies that the sectional curvature vanishes entirely, cf.\ e.g.\ \cite{NomDaj}. 
Thus, using the standard classification theorem for constant curvature spaces, cf.\ \cite[Corollary 8.26]{One83}
we obtain the following corollary. 

\begin{corollary}[Characterising Minkowski space]
Let $(M,g)$ be a geodesically complete and simply connected spacetime such that $(I^+(p), i_p)$ is a geodesic \LpLS for all $p \in M$. 
Then $M$ is isometric to Minkowski space. 
\end{corollary}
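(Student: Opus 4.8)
The plan is to feed the hypothesis into the preceding theorem pointwise and then invoke two classical facts from smooth Lorentzian geometry. First, since by assumption $(I^+(p),i_p)$ is a geodesic \LpLS for \emph{every} $p \in M$, the preceding theorem (Geodesic inversion and vanishing sectional curvature) applies at each $p$ and gives $K(\Pi)=0$ for all timelike $2$-planes $\Pi \subset T_pM$. In other words, $\TSec \equiv 0$ on all of $M$, simultaneously as an upper and as a lower bound; note that only the hypothesis ``$(I^+(p),i_p)$ geodesic for all $p$'' is used here, and not yet completeness or simple connectedness.

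Next I would promote this to the vanishing of the full curvature tensor. This is a pointwise statement: at a fixed $p$, if the sectional curvature of the Levi-Civita connection vanishes on every timelike plane in $T_pM$, then $R_p = 0$. The reason is that the timelike planes form an open subset of the Grassmannian of non-degenerate $2$-planes (being null is a codimension-one condition), while the numerator $(v,w) \mapsto g(R(w,v)v,w)$ of the sectional curvature is polynomial in its arguments; hence the sectional curvature function vanishes on all non-degenerate planes, and a standard polarization argument using the curvature symmetries then forces $g(R(X,Y)Z,W)=0$ for all $X,Y,Z,W \in T_pM$. This is precisely the content we quote from \cite{NomDaj}, which I would simply cite. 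Consequently $\Riem \equiv 0$ on $M$, i.e.\ $(M,g)$ is flat.

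Finally, $(M,g)$ is a geodesically complete, simply connected, flat Lorentzian manifold of signature $(+,-,\dots,-)$, so the classification of complete simply connected spaces of constant sectional curvature, \cite[Corollary 8.26]{One83}, identifies it isometrically with $\mathbb R^{1,n}$, i.e.\ Minkowski space. The only step here that is not pure bookkeeping is the algebraic implication $\TSec \equiv 0 \Rightarrow \Riem \equiv 0$, and even that is outsourced to \cite{NomDaj}; all the genuine geometric work has already been carried out in the preceding theorem together with Proposition \ref{prop: isometry into the minkowski cone} and Theorem \ref{thm: ptolemy implies NPC}.
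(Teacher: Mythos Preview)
Your proposal is correct and follows essentially the same route as the paper: apply the preceding theorem at every $p$ to obtain $\TSec \equiv 0$, cite \cite{NomDaj} to upgrade this to full flatness, and then invoke \cite[Corollary 8.26]{One83} for the classification. The only difference is that you spell out a sketch of the algebraic step (timelike planes open in the Grassmannian, polynomial numerator, polarization), whereas the paper simply cites \cite{NomDaj} without elaboration.
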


\begin{ack}
We want to thank Tobias Beran and Darius Er\"os for interesting discussions in the initial stages of this project. 

The authors acknowledge the support of the European Union - NextGenerationEU, in the framework of the PRIN Project `Contemporary perspectives on geometry and gravity' (code 2022JJ8KER – CUP G53D23001810006). The views and opinions expressed are solely those of the authors and do not necessarily reflect those of the European Union, nor can the European Union be held responsible for them. 

The second author is also supported by the Program of China Scholarship Council grant (No.202406340143). 
\end{ack}


\bibliography{bibliography} 

@book {Chr,
    AUTHOR = {Christodoulou, Demetrios},
     TITLE = {Mathematical problems of general relativity. {I}},
    SERIES = {Zurich Lectures in Advanced Mathematics},
 PUBLISHER = {European Mathematical Society (EMS), Z\"urich},
      YEAR = {2008},
     PAGES = {x+147},
      ISBN = {978-3-03719-005-0},
   MRCLASS = {83C05 (35Q75 58J45 83-02)},
  MRNUMBER = {2391586},
MRREVIEWER = {Alan\ D.\ Rendall},
       DOI = {10.4171/005},
       URL = {https://doi.org/10.4171/005},
}

@book {Berger,
    AUTHOR = {Berger, Marcel},
     TITLE = {Geometry. {I}},
    SERIES = {Universitext},
      NOTE = {Translated from the French by M. Cole and S. Levy},
 PUBLISHER = {Springer-Verlag, Berlin},
      YEAR = {1987},
     PAGES = {xiv+428},
      ISBN = {3-540-11658-3},
   MRCLASS = {51-01},
  MRNUMBER = {882541},
       DOI = {10.1007/978-3-540-93815-6},
       URL = {https://doi.org/10.1007/978-3-540-93815-6},
}

@article {BuFaWra,
    AUTHOR = {Buckley, S. M. and Falk, K. and Wraith, D. J.},
     TITLE = {Ptolemaic spaces and {CAT}(0)},
   JOURNAL = {Glasg. Math. J.},
  FJOURNAL = {Glasgow Mathematical Journal},
    VOLUME = {51},
      YEAR = {2009},
    NUMBER = {2},
     PAGES = {301--314},
      ISSN = {0017-0895,1469-509X},
   MRCLASS = {53C23 (51F99 53C20)},
  MRNUMBER = {2500753},
MRREVIEWER = {Hanspeter\ Fischer},
       DOI = {10.1017/S0017089509004984},
       URL = {https://doi.org/10.1017/S0017089509004984},
}

@misc{MS,
      title={Lorentzian {G}romov--{H}ausdorff convergence and pre-compactness}, 
      author={Andrea Mondino and Clemens Sämann},
      year={2025},
      eprint={2504.10380},
      archivePrefix={arXiv},
      primaryClass={math.DG},
      url={https://arxiv.org/abs/2504.10380}, 
      note={Preprint: \url{https://arxiv.org/abs/2504.10380}}
}

@article {BKR24,
    AUTHOR = {Beran, Tobias and Kunzinger, Michael and Rott, Felix},
     TITLE = {On curvature bounds in {L}orentzian length spaces},
   JOURNAL = {J. Lond. Math. Soc. (2)},
  FJOURNAL = {Journal of the London Mathematical Society. Second Series},
    VOLUME = {110},
      YEAR = {2024},
    NUMBER = {2},
     PAGES = {Paper No. e12971, 41},
      ISSN = {0024-6107,1469-7750},
   MRCLASS = {53C23 (53B30 53C50)},
  MRNUMBER = {4781260},
MRREVIEWER = {Argam\ Ohanyan},
       DOI = {10.1112/jlms.12971},
       URL = {https://doi.org/10.1112/jlms.12971},
}

@article {BS23,
    AUTHOR = {Beran, Tobias and S\"amann, Clemens},
     TITLE = {Hyperbolic angles in {L}orentzian length spaces and timelike
              curvature bounds},
   JOURNAL = {J. Lond. Math. Soc. (2)},
  FJOURNAL = {Journal of the London Mathematical Society. Second Series},
    VOLUME = {107},
      YEAR = {2023},
    NUMBER = {5},
     PAGES = {1823--1880},
      ISSN = {0024-6107,1469-7750},
   MRCLASS = {53B30 (28A75 51K10 53C23 53C50 53C80)},
  MRNUMBER = {4585303},
MRREVIEWER = {Benjam\'in\ Olea},
       DOI = {10.1112/jlms.12726},
       URL = {https://doi.org/10.1112/jlms.12726},
}

@article {BHNR25,
    AUTHOR = {Beran, Tobias and Harvey, John and Napper, Lewis and Rott,
              Felix},
     TITLE = {A {T}oponogov globalisation result for {L}orentzian length
              spaces},
   JOURNAL = {Math. Ann.},
  FJOURNAL = {Mathematische Annalen},
    VOLUME = {392},
      YEAR = {2025},
    NUMBER = {3},
     PAGES = {3447--3478},
      ISSN = {0025-5831,1432-1807},
   MRCLASS = {53C50 (53B30 53C23 53C80)},
  MRNUMBER = {4939668},
       DOI = {10.1007/s00208-025-03167-w},
       URL = {https://doi.org/10.1007/s00208-025-03167-w},
}

@misc{Octet,
      title={A nonlinear d' {A}lembert comparison theorem and causal differential calculus on metric measure spacetimes}, 
      author={Tobias Beran and Mathias Braun and Matteo Calisti and Nicola Gigli and Robert J. McCann and Argam Ohanyan and Felix Rott and Clemens Sämann},
      year={2024},
      eprint={2408.15968},
      archivePrefix={arXiv},
      primaryClass={math.DG},
      url={https://arxiv.org/abs/2408.15968}, 
    note={Preprint: \url{https://arxiv.org/abs/2408.15968}}
}

@misc{BR25+,
      title={Reshetnyak {M}ajorisation and discrete upper curvature bounds for {L}orentzian length spaces}, 
      author={Tobias Beran and Felix Rott},
      year={2025},
      eprint={2509.05224},
      archivePrefix={arXiv},
      primaryClass={math.DG},
      url={https://arxiv.org/abs/2509.05224}, 
    note={Preprint: \url{https://arxiv.org/abs/2509.05224}}
}

@misc{EG25+,
      title={A synthetic {L}orentzian {C}artan-{H}adamard theorem}, 
      author={Darius Erös and Sebastian Gieger},
      year={2025},
      eprint={2506.22197},
      archivePrefix={arXiv},
      primaryClass={math.MG},
      url={https://arxiv.org/abs/2506.22197}, 
    note={Preprint: \url{https://arxiv.org/abs/2506.22197}}
}

@Book{BH99,
 Author = {Bridson, Martin R. and Haefliger, Andr{\'e}},
 Title = {Metric spaces of non-positive curvature},
 FSeries = {Grundlehren der Mathematischen Wissenschaften},
 Series = {Grundlehren Math. Wiss.},
 ISSN = {0072-7830},
 Volume = {319},
 ISBN = {3-540-64324-9},
 Year = {1999},
 Publisher = {Berlin: Springer},
 Language = {English},
 Keywords = {53-02,53C23,53C70,53C45,20F65,57M07},
 zbMATH = {1385418},
 Zbl = {0988.53001}
}

@article {FLS07,
    AUTHOR = {Foertsch, Thomas and Lytchak, Alexander and Schroeder, Viktor},
     TITLE = {Nonpositive curvature and the {P}tolemy inequality},
   JOURNAL = {Int. Math. Res. Not. IMRN},
  FJOURNAL = {International Mathematics Research Notices. IMRN},
      YEAR = {2007},
    NUMBER = {22},
     PAGES = {Art. ID rnm100, 15},
      ISSN = {1073-7928,1687-0247},
   MRCLASS = {53C23 (53C70)},
  MRNUMBER = {2376212},
MRREVIEWER = {Daniel\ P.\ Groves},
       DOI = {10.1093/imrn/rnm100},
       URL = {https://doi.org/10.1093/imrn/rnm100},
}

@Article{BKOR25,
 Author = {Beran, Tobias and Kunzinger, Michael and Ohanyan, Argam and Rott, Felix},
 Title = {The equivalence of smooth and synthetic notions of timelike sectional curvature bounds},
 FJournal = {Proceedings of the American Mathematical Society},
 Journal = {Proc. Am. Math. Soc.},
 ISSN = {0002-9939},
 Volume = {153},
 Number = {2},
 Pages = {783--797},
 Year = {2025},
 Language = {English},
 DOI = {10.1090/proc/17022},
 Keywords = {53B30,53C23,53B50},
 zbMATH = {7972338}
}

@article {KS18,
    AUTHOR = {Kunzinger, Michael and S\"amann, Clemens},
     TITLE = {Lorentzian length spaces},
   JOURNAL = {Ann. Global Anal. Geom.},
  FJOURNAL = {Annals of Global Analysis and Geometry},
    VOLUME = {54},
      YEAR = {2018},
    NUMBER = {3},
     PAGES = {399--447},
      ISSN = {0232-704X,1572-9060},
   MRCLASS = {53C23 (53B30 53C50 53C80)},
  MRNUMBER = {3867652},
MRREVIEWER = {Benjam\'in\ Olea},
       DOI = {10.1007/s10455-018-9633-1},
       URL = {https://doi.org/10.1007/s10455-018-9633-1},
}

@book {BurBurIva,
    AUTHOR = {Burago, Dmitri and Burago, Yuri and Ivanov, Sergei},
     TITLE = {A course in metric geometry},
    SERIES = {Graduate Studies in Mathematics},
    VOLUME = {33},
 PUBLISHER = {American Mathematical Society, Providence, RI},
      YEAR = {2001},
     PAGES = {xiv+415},
      ISBN = {0-8218-2129-6},
   MRCLASS = {53C23},
  MRNUMBER = {1835418},
MRREVIEWER = {Mario\ Bonk},
       DOI = {10.1090/gsm/033},
       URL = {https://doi.org/10.1090/gsm/033},
}

@article {NomDaj,
    AUTHOR = {Dajczer, Marcos and Nomizu, Katsumi},
     TITLE = {On sectional curvature of indefinite metrics. {II}},
   JOURNAL = {Math. Ann.},
  FJOURNAL = {Mathematische Annalen},
    VOLUME = {247},
      YEAR = {1980},
    NUMBER = {3},
     PAGES = {279--282},
      ISSN = {0025-5831,1432-1807},
   MRCLASS = {53C50},
  MRNUMBER = {568993},
MRREVIEWER = {V.\ G.\ Kopp},
       DOI = {10.1007/BF01348960},
       URL = {https://doi.org/10.1007/BF01348960},
}

@book {BeemBook,
    AUTHOR = {Beem, John K. and Ehrlich, Paul E. and Easley, Kevin L.},
     TITLE = {Global {L}orentzian geometry},
    SERIES = {Monographs and Textbooks in Pure and Applied Mathematics},
    VOLUME = {202},
   EDITION = {Second},
 PUBLISHER = {Marcel Dekker, Inc., New York},
      YEAR = {1996},
     PAGES = {xiv+635},
      ISBN = {0-8247-9324-2},
   MRCLASS = {53C50 (53-02 83-02)},
  MRNUMBER = {1384756},
MRREVIEWER = {Peter\ R.\ Law},
}

@article {Min19,
    AUTHOR = {Minguzzi, Ettore},
     TITLE = {Lorentzian causality theory},
   JOURNAL = {Living Rev. Relativ.},
  FJOURNAL = {Living Reviews in Relativity},
    VOLUME = {22},
      YEAR = {2019},
    NUMBER = {3},
     PAGES = {},
      ISSN = {},
    MRCLASS = {},
  MRNUMBER = {},
MRREVIEWER = {},
    DOI = {10.1007/s41114-019-0019-x},
       URL = {https://doi.org/10.1007/s41114-019-0019-x},
}

@article {MinSuhr,
    AUTHOR = {Minguzzi, E. and Suhr, S.},
     TITLE = {Lorentzian metric spaces and their {G}romov--{H}ausdorff
              convergence},
   JOURNAL = {Lett. Math. Phys.},
  FJOURNAL = {Letters in Mathematical Physics},
    VOLUME = {114},
      YEAR = {2024},
    NUMBER = {3},
     PAGES = {Paper No. 73, 63},
      ISSN = {0377-9017,1573-0530},
   MRCLASS = {53C50 (51F99 83C45)},
  MRNUMBER = {4752400},
MRREVIEWER = {S.\ M. B. Kashani},
       DOI = {10.1007/s11005-024-01813-z},
       URL = {https://doi.org/10.1007/s11005-024-01813-z},
}

@article {GKS19,
    AUTHOR = {Grant, James D. E. and Kunzinger, Michael and S\"amann,
              Clemens},
     TITLE = {Inextendibility of spacetimes and {L}orentzian length spaces},
   JOURNAL = {Ann. Global Anal. Geom.},
  FJOURNAL = {Annals of Global Analysis and Geometry},
    VOLUME = {55},
      YEAR = {2019},
    NUMBER = {1},
     PAGES = {133--147},
      ISSN = {0232-704X,1572-9060},
   MRCLASS = {53C23 (53B30 53C50 53C80 83C75)},
  MRNUMBER = {3916126},
MRREVIEWER = {Guanghan\ Li},
       DOI = {10.1007/s10455-018-9637-x},
       URL = {https://doi.org/10.1007/s10455-018-9637-x},
}

@article {BNR25,
    AUTHOR = {Beran, Tobias and Napper, Lewis and Rott, Felix},
     TITLE = {Alexandrov's patchwork and the {B}onnet-{M}yers theorem for
              {L}orentzian length spaces},
   JOURNAL = {Trans. Amer. Math. Soc.},
  FJOURNAL = {Transactions of the American Mathematical Society},
    VOLUME = {378},
      YEAR = {2025},
    NUMBER = {4},
     PAGES = {2713--2743},
      ISSN = {0002-9947,1088-6850},
   MRCLASS = {53C50 (51K10 53B30 53C23)},
  MRNUMBER = {4880460},
       DOI = {10.1090/tran/9372},
       URL = {https://doi.org/10.1090/tran/9372},
}

@article {BHX08,
    AUTHOR = {Buckley, Stephen M. and Herron, David A. and Xie, Xiangdong},
     TITLE = {Metric space inversions, quasihyperbolic distance, and uniform
              spaces},
   JOURNAL = {Indiana Univ. Math. J.},
  FJOURNAL = {Indiana University Mathematics Journal},
    VOLUME = {57},
      YEAR = {2008},
    NUMBER = {2},
     PAGES = {837--890},
      ISSN = {0022-2518,1943-5258},
   MRCLASS = {30F45 (54E35)},
  MRNUMBER = {2414336},
MRREVIEWER = {Petra\ Bonfert-Taylor},
}

@article {Sch40,
    AUTHOR = {Schoenberg, I. J.},
     TITLE = {On metric arcs of vanishing {M}enger curvature},
   JOURNAL = {Ann. of Math. (2)},
  FJOURNAL = {Annals of Mathematics. Second Series},
    VOLUME = {41},
      YEAR = {1940},
     PAGES = {715--726},
      ISSN = {0003-486X},
   MRCLASS = {27.2X},
  MRNUMBER = {2903},
MRREVIEWER = {L.\ M.\ Blumenthal},
       DOI = {10.2307/1968849},
       URL = {https://doi.org/10.2307/1968849},
}

@article {Sch52,
    AUTHOR = {Schoenberg, I. J.},
     TITLE = {A remark on {M}. {M}. {D}ay's characterization of
              inner-product spaces and a conjecture of {L}. {M}.
              {B}lumenthal},
   JOURNAL = {Proc. Amer. Math. Soc.},
  FJOURNAL = {Proceedings of the American Mathematical Society},
    VOLUME = {3},
      YEAR = {1952},
     PAGES = {961--964},
      ISSN = {0002-9939,1088-6826},
   MRCLASS = {46.3X},
  MRNUMBER = {52035},
MRREVIEWER = {Chr.\ Pauc},
       DOI = {10.2307/2031742},
       URL = {https://doi.org/10.2307/2031742},
}

@misc{BBCGRR26+,
      title={A Splitting Theorem for non-positively curved {L}orentzian spaces}, 
      author={Barton, Joe and Beran, Tobias and Che, Mauricio and Gieger, Sebastian and R\"ohrig, Jona and Rott, Felix},
      year={2026},
      eprint={2601.14058},
      archivePrefix={arXiv},
      primaryClass={math.DG},
      url={https://arxiv.org/abs/2601.14058}, 
      note={Preprint: \url{https://arxiv.org/abs/2601.14058}}
}

@misc{GRZ26+,
      title={{PDE} aspects of the dynamical optimal transport in the {L}orentzian setting}, 
      author={Gigli, Nicola and Rott, Felix and Zanardini, Matteo},
      year={2026},
      eprint={2601.13167},
      archivePrefix={arXiv},
      primaryClass={math.AP},
      url={https://arxiv.org/abs/2601.13167}, 
      note={Preprint: \url{https://arxiv.org/abs/2601.13167}}
}

@misc{BEOR25+,
      title={Concavity of spacetimes}, 
      author={Beran, Tobias and Er\"os, Darius and Ohta, Shinichi and Rott, Felix},
      year={2025},
      eprint={2509.26196},
      archivePrefix={arXiv},
      primaryClass={math.DG},
      url={https://arxiv.org/abs/2509.26196}, 
      note={Preprint: \url{https://arxiv.org/abs/2509.26196}}
}

@book {CFTbook,
    AUTHOR = {Di Francesco, Philippe and Mathieu, Pierre and S\'en\'echal,
              David},
     TITLE = {Conformal field theory},
    SERIES = {Graduate Texts in Contemporary Physics},
 PUBLISHER = {Springer-Verlag, New York},
      YEAR = {1997},
     PAGES = {xxii+890},
      ISBN = {0-387-94785-X},
   MRCLASS = {81T40 (81-02)},
  MRNUMBER = {1424041},
MRREVIEWER = {Christoph\ Schweigert},
       DOI = {10.1007/978-1-4612-2256-9},
       URL = {https://doi.org/10.1007/978-1-4612-2256-9},
}

@ misc{HBS,
	author = {Gigli, Nicola},
	date-added = {2024-08-02 10:15:05 +0200},
	date-modified = {2024-08-02 10:15:22 +0200},
	note = {Preprint: \url{https://arxiv.org/abs/2503.10467}},
	title = {Hyperbolic Banach Spaces},
    year = {2025},
 	eprint={2503.10467},
      	archivePrefix={arXiv},
      	url={https://arxiv.org/abs/2503.10467}
}

@article {Harris,
    AUTHOR = {Harris, Steven G.},
     TITLE = {A triangle comparison theorem for {L}orentz manifolds},
   JOURNAL = {Indiana Univ. Math. J.},
  FJOURNAL = {Indiana University Mathematics Journal},
    VOLUME = {31},
      YEAR = {1982},
    NUMBER = {3},
     PAGES = {289--308},
      ISSN = {0022-2518,1943-5258},
   MRCLASS = {53C50 (53B30)},
  MRNUMBER = {652817},
MRREVIEWER = {J.\ K.\ Beem},
       DOI = {10.1512/iumj.1982.31.31026},
       URL = {https://doi.org/10.1512/iumj.1982.31.31026},
}

@article{mccann2025tradinglinearityellipticitynonsmooth,
      title={Trading linearity for ellipticity: a nonsmooth approach to {E}instein's theory of gravity and the {L}orentzian splitting theorems}, 
      author={Robert J. McCann},
      journal= {Proceedings of the Forward From the Fields Medal 2024, Deirdre Haskell and V. Kumar Murty, eds. (To appear)},
      year={2025},
      note= {\url{https://arxiv.org/abs/2501.00702}}, 
}

@book {One83,
    AUTHOR = {O'Neill, Barrett},
     TITLE = {Semi-{R}iemannian geometry},
    SERIES = {Pure and Applied Mathematics},
    VOLUME = {103},
      NOTE = {With applications to relativity},
 PUBLISHER = {Academic Press, Inc. [Harcourt Brace Jovanovich, Publishers],
              New York},
      YEAR = {1983},
     PAGES = {xiii+468},
      ISBN = {0-12-526740-1},
   MRCLASS = {53-01 (53B30 53C50 83-02)},
  MRNUMBER = {719023},
MRREVIEWER = {N.\ V.\ Mitskevich},
}
\bibliographystyle{abbrv}

\end{document}